\documentclass[leqno,final]{siamltex}
\usepackage{amssymb}
\usepackage{lineno,hyperref}
\setlength{\voffset}{-.2in} \pagestyle{myheadings}
\usepackage{graphicx}
\usepackage{color}
\usepackage{pstricks}
\usepackage{amsmath,amstext,amssymb}
\usepackage{multirow}
\newtheorem{remark}{Remark}[section]
\newtheorem{Theorem}{Theorem}[section]
\numberwithin{equation}{section}
\newcommand{\abs}[1]{\left\vert#1\right\vert}

\newcommand{\norm}[1]{\left\Vert#1\right\Vert}
\newcommand{\norml}[2]{\left\Vert#1\right\Vert_{L^2(#2)}}

\newcommand{\av}[1]{\left\{#1\right\}}
\newcommand{\jm}[1]{\left[#1\right]}

\newcommand{\T}{\mathcal{T}}

\newcommand{\bn}{\mathbf{n}}

\newcommand{\nn}{\nonumber}
\newcommand{\ls}{\lesssim}

\newcommand{\be}{\beta}

\newcommand{\ep}{\varepsilon}
\newcommand{\eps}{\varepsilon}
\newcommand{\ga}{\gamma}
\newcommand{\Ga}{\Gamma}

\newcommand{\na}{\nabla}

\newcommand{\Om}{\Omega}
\newcommand{\pa}{\partial}

\newcommand{\dx}{\,\mathrm{d}x}
\newcommand{\dy}{\,\mathrm{d}y}
\newcommand{\ds}{\,\mathrm{d}s}


\title{Multiscale discontinuous Petrov--Galerkin method for the multiscale elliptic problems}
\markboth{F. Song and W. Deng}{MsDPGM for Multiscale Problems}
\author{Fei Song\thanks{ Department of Mathematics, Nanjing University, Jiangsu,
210093, P.R. China. Current address: College of Science, Nanjing Forestry University, Jiangsu,
210037, P.R. China. ({\tt songfei@smail.nju.edu.cn}).}
{The work of this author was
partially supported by the University Postgraduate Research and Innovation Project of Jiangsu Province 2014 under Grant KYZZ\_0021.}
\and
Weibing Deng
\thanks{Department of Mathematics, Nanjing University, Jiangsu,
210093, P.R. China. ({\tt wbdeng@nju.edu.cn}). The work of this author was
partially supported by the NSF of China grant 10971096 and the Project Funded by the Priority Academic Program Development of Jiangsu Higher Education Institutions.}
}

\begin{document}

\maketitle


\setcounter{page}{1}
\begin{abstract}
{In this paper we present a new multiscale discontinuous Petrov--Galerkin method  (MsDPGM)  for multiscale elliptic problems. This method utilizes the classical oversampling multiscale basis in the framework of Petrov--Galerkin version of discontinuous Galerkin finite element method, allowing us to better cope with multiscale features in the solution.
The introduced  MsDPGM  takes advantages of the multiscale Petrov--Galerkin method (MsPGM) and discontinuous Galerkin method (DGM), which can eliminate the resonance error completely, and can decrease the computational complexity, allowing for more efficient solution algorithms. Upon the $H^2$ norm error estimate between the multiscale solution and the homogenized solution with the first order corrector, we give a detailed multiscale convergence analysis under the assumption that the oscillating coefficient is periodic. We also investigate the corresponding  multiscale discontinuous finite element method (MsDFEM) which coupling the classical oversampling multiscale basis with DGM since it has not been studied detailedly in both aspects of error analysis and numerical tests in the literature. Numerical experiments  are carried out for the multiscale elliptic problems with periodic and randomly generated log-normal coefficients to demonstrate the proposed method.} 
\end{abstract}

\begin{keywords}
Multiscale discontinuous Petrov--Galerkin method,  multiscale problems, error estimate.
\end{keywords}

\begin{AMS}
34E13, 
35B27, 
65N12, 
\end{AMS}

\section{Introduction}
 This paper considers the numerical approximation of second order elliptic problems with heterogeneous and highly oscillating coefficients.
 These problems arise in many applications such as flows in porous media or composite materials. The numerical simulation of such problems
 in heterogeneous media poses major mathematical and computational challenges.
Standard numerical methods such as the finite element method (FEM) or the finite volume method (FVM) usually require the mesh size very fine.
This leads to tremendous amount of computer memory and CPU time.
{In the past several decades, a number of multiscale numerical methods have been proposed to solve these problems,
see e.g., multiscale finite element method (MsFEM) \cite{EHW,HW,HWC}, heterogeneous multiscale method (HMM) \cite{EE1,EE3,EMZ},
upscaling or numerical homogenization method \cite{CDY,dur,Ewing,Farmer}, variational multiscale method (or the residual-free bubble method) \cite{BFHR,BMS,Hu98,Hu,San}, wavelet homogenization techniques \cite{DE,ER}, and multigrid
numerical homogenization techniques \cite{FB2,MDH}.}
Most of them are presented on meshes that are coarser than the scale of oscillations.
The small scale effect on the coarse scale is either captured by localized multiscale basis functions or modeled into the coarse scale equations
with prescribed analytical forms.

In this paper, the framework of the MsFEM is used to propose a new
method. { Two main ingredients of the MsFEM are the global formulation of the method such as various finite element methods and the construction of basis functions. The key point of MsFEM is to construct the multiscale
basis from the local solutions of the elliptic operator for finite
element formulation.} There have been many extensions and applications
of the method in the past fifteen years (cf. \cite{Aarnes2004,AE2008,
AEJ2008,CH2002,cc06,cc07,cc0708,DYX09,EGHE2006,EH2007,HWZh,JLT2003}).
We refer the reader to the book \cite{EH2009} for more discussions
on the theory and applications of MsFEMs.

{ It is shown that the oversampling MsFEM is a nonconforming FEM where the numerical solution has certain continuity across the inner-element boundaries, while its basis functions are discontinuous at the inner-element boundaries (see \cite{HW,EHW}). Note that the discontinuous Garlerkin (DG) FEMs do not ask for any continuity, which comes the natural idea that using the DG FEM as the global formulation coupling with the oversampling multiscale bases (see \cite{EH2009}).
DG FEMs for elliptic boundary value problems have been studied since late 1970s, and it is now an active research area (see \cite{dd76,arnold82,abcm01}).
Examples of the DG methods include the Local Discontinuous Galerkin (LDG)
method \cite{Aarnes2005,CASTILLO2006,P2000An}, and the interior penalty discontinuous Galerkin (IPDG) methods \cite{arnold82,abcm01,b73,bz73,bastian2009unfitted,bh00,cc04}.
In this paper we are concerned with the IPDG method, still named DGM.
DG methods admit good local
conservation properties of the state variable and also offer the use
of very general meshes due to the lack of inter-element continuity
requirement, e.g., meshes that contain several different types of
elements or hanging nodes. These features are crucial in many
multiscale applications (see \cite{DW2014,SDW2015}).}

{
There have been shown several multiscale methods related with DG methods in the past ten years. For instance, a multiscale model reduction technique in the
framework of the DG FEM for the high--contrast problems, named Generalized Multiscale Finite Element method, was presented in \cite{ya2013}. The special multiscale basis
functions of the DG approximation space to capture
the singularity of the solutions were discussed in \cite{shu2011,shu2008,shu2014}. The variational multiscale method based on the DGM for the elliptic multiscale problems without any assumption on scale separation or periodicity were proposed in \cite{Mp131,Mp13}. Heterogeneous multiscale method based on DGM for homogenization or advection--diffusion problems were presented in \cite{aa2008,AH2014}.
However, to our knowledge, the multiscale discontinuous finite element method (MsDFEM) which coupling the classical oversampling multiscale basis with the discontinuous Galerkin method has not been studied detailedly in both aspects of error analysis and numerical tests in the literature. To fill this gap, in this paper we provide the formulation and the corresponding error estimate of the MsDFEM. Our numerical experiments show that MsDFEM takes advantages of MsFEM and DGM, which can eliminate the resonance error and obtain more accurate results than the classical MsFEM. 

Further, we notice that the Petrov--Galerkin (PG)
method can decrease the computational complexity significantly,
allowing for more efficient solution algorithms. Moreover, it has been found that
the MsPGM can eliminate the resonance
error by using the oversampling technique \cite{HW} and the
conforming piecewise linear functions as test functions \cite{HWZh, HWC}.
Therefore, in this paper we try to use the discontinuous oversampling multiscale space in the framework of PG method, which couples both DGM and MsPGM, named multiscale discontinuous Petrov--Galerkin method (MsDPGM).
There are two key issues of the MsDPGM to
consider. The first one is how to define its bilinear form and prove the coercive condition of the bilinear form, which
needs to use the transfer operator between the approximation space
and the test function space. We emphasize that, compared to the MsDFEM, the bilinear form of MsDPGM is not just choosing the discontinuous piecewise linear function space as test function space. More delicate choice of the terms of bilinear form should be made. The second one is the error estimate of MsDPGM.  We give the $H^2$ norm error estimate between the multiscale solution and the homogenized solution with the first order corrector, which plays an important role in the error estimate. The MsDPGM takes advantages of the MsPGM and DGM, which is expected to better approximate the multiscale solution than the standard MsPGM.
}

The proposed method is related with a combined finite element and oversampling multiscale Petrov--Galerkin method (FE-OMsPGM) \cite{SDW2015}. The idea of FE-OMsPGM is to utilize the traditional FEM directly on a fine mesh of the problematic part of the domain and use the OMsPGM on a coarse mesh of the other part. The transmission condition across the FE-OMsPGM interface is treated by the penalty technique of DGM. In \cite{SDW2015}, they deal with the transmission condition by penalizing the jumps from linear function values as well as the fluxes of the finite element solution on the fine mesh to those of the oversampling multiscale solution on the coarse mesh. Compared to \cite{SDW2015}, in this paper, we develop and analyze MsDPGM for the multiscale elliptic problems. The basic idea is to use PG formulation based on the discontinuous multiscale approximation space. The jump terms across each inter-element are dealt with penalty technique. The penalty term of linear function values is taken as that of the FE-OMsPGM, while the penalty term of the fluxes is not needed here.


Although the error analysis is given under the assumption
that the oscillating coefficient is periodic, our method is not
restrict to the periodic case. The numerical results show that the
introduced MsDPGM is very efficient for randomly generated
coefficients. Recently, the multiscale methods on localization of
the elliptic multiscale problems with highly varying (non-periodic)
coefficients are studied in some papers. For instance, the new
variational multiscale method is presented in \cite{Mp14}; a new oversampling
strategy for the MsFEM is presented in \cite{Hp13}. In the future
work, we will give more extensions and developments on our method
with the new oversampling strategy.

The outline of this paper is as follows. In Section 2, we give the
model problem and recall
the DG variational formulation of the model problem in the broken Sobolev spaces. Section 3 is devoted
to derive the MsDPGM. It includes the introduction of discontinuous oversampling multiscale approximation space and the derivation of
the formulations of { MsDFEM and MsDPGM}. In Section 4, we review the homogenization results and give some preliminaries for the error analysis. In Section 5, we give the main results of our method. It includes the stability and a priori error estimate of the proposed method. In Section 6, we first give several
numerical examples with periodic coefficients to demonstrate the
accuracy of the method. {Then we do the experiment to study how
the size of oversampling elements affects the errors.} Finally, we apply our method to multiscale
problems on the L--shaped domain to demonstrate the efficiency of the
method. Conclusions are given in the last section.

\section{Model problem and DG variational formulation}
In this section we introduce the multiscale model problem and give the DG variational formulation of the model problem.
First we state some notations and conventions. Throughout this paper, the Einstein summation convention is used: summation is taken over repeated indices.
Standard notation on Lebesgue and Sobolev spaces is employed.
Subsequently $C, C_0, C_1, C_2,\cdot \cdot \cdot$ denote generic constants,
which are independent of $ \ep, h $, unless otherwise stated.
We also use the shorthand notation $A \lesssim B$ and $B \lesssim A$ for the inequality $A \leq CB$ and $B \leq CA$. The notation $A \eqsim B$ is equivalent to the statement $A \lesssim B$ and $B \lesssim A$.
\subsection{Model problem}
Let $\Om\subset\mathbf{R}^n, n=2,3 $ be a bounded polyhedral domain.
Consider the following multiscale elliptic problem:
\begin{equation}\label{eproblem}
\left\{\begin{aligned} -\nabla\cdot(\mathbf{a}^\ep(x)\nabla
u_\ep(x)) & = f(x)&&
  \text{in}\,\Omega, \\
  u_\ep(x)&=0&&   \text{on}\,\partial\Omega,
\end{aligned}\right.
\end{equation}
where $\epsilon\ll 1$ is a parameter that represents the small scale
in the physical problem, $f \in L^2(\Om)$, and
$\mathbf{a}^\ep(x)=(a_{ij}^\ep(x))$ is a symmetric, positive
definite matrix:
\begin{equation}
  \lambda |\xi|^2\leq a_{ij}^\ep(x)\xi_i\xi_j\leq\Lambda |\xi|^2\quad
  \forall \xi\in \mathbf{R}^n,\, x\in\bar{\Omega}
\end{equation}
for some positive constants $\lambda$ and $\Lambda$.

\subsection{DG variational Formulation} In this subsection, we derive the DG variational formulation of the
model problem in the broken Sobolev spaces.
Let ${\cal T}_{h}$ be a quasi-uniform
triangulation of the domain $\Om$. We define $h_K$ as diam $(K)$ and
denote by $h=\text {max}_{K\in{\cal T}_h}h_K$.

We introduce the broken Sobolev spaces for any real number $s$,
$$ H^s({\cal T}_h) = \{v\in L^2(\Om):  \forall K \in {\cal T}_h, v|_K \in H^s(K)\}, $$
equipped with the broken Sobolev norm:
$$ |||v|||_{H^s({\cal T}_h)} = \Big(\sum_{K \in {\cal T}_h} ||v||^2_{H^s(K)}\Big)^{1/2}.$$

Denote by $\Ga_h$ the set of interior edges/faces of the ${\cal T}_h$.
With each edge/face $e$, we associate a unit normal vector $\bn$. If $e$ is on the boundary $\pa \Om$, then $\bn$ is taken to be the unit outward vector normal to $\pa \Om$.

If $v \in H^1(\T_h)$, the trace of $v$ along any side of one element $K$ is well defined.
If two elements $K_1^e$ and $K_2^e$ are neighbors and share one common side $e$, there are two traces of $v$ along $e$.
We define the average and jump for $v$. We assume that the normal vector $\bn$ is oriented from $K_1^e$ to $K_2^e$:
\begin{equation}\label{eja}
    \av{v}:=\frac{v|_{K_1^e}+v|_{K_2^e}}{2},\qquad \jm{v}:=v|_{K_1^e}-v|_{K_2^e} \qquad \forall e=\pa K_1^e \cap \pa K_2^e.
 \end{equation}
We extend the definition of jump and average to sides that belong to the boundary $\pa \Om$:
 \begin{equation*}
    \av{v} = \jm{v}=v|_{K_1^e} \qquad \forall e=\pa K_1^e \cap \pa \Om.
 \end{equation*}

In the following, we assume that $s=2$. Multiplying \eqref{eproblem} by any $v\in H^s({\cal T}_h)$, integrating on each element $K$, and using integration by parts, we obtain
\begin{equation*}
  \int_{K}\mathbf{a}^\ep\na u_\ep\cdot\na v\dx\nn
      -\int_{\pa K} \mathbf{a}^\ep\na u_\ep\cdot\bn_K v\ds=\int_{K}fv.
\end{equation*}
We recall that $\bn_K$ is the outward normal to $K$. Summing over all elements, and switching to the normal vectors $\bn$, we have
\begin{equation*}
     \sum_{K\in {\cal T}_h}\int_{\pa K} \mathbf{a}^\ep\na u_\ep\cdot\bn_K v\ds=\sum_{e\in\Ga_h\cup \pa \Om}\int_{e} \jm{\mathbf{a}^\ep\na u_\ep\cdot\bn v}\ds.
\end{equation*}
From the regularity of the solution $u_\ep$, it follows that
\begin{align}\label{wf}
  \sum_{K\in {\cal T}_h} \int_{K}\mathbf{a}^\ep\na u_\ep\cdot\na v\dx\nn
      -\sum_{e\in\Ga_h\cup \pa \Om}\int_{e} \av{\mathbf{a}^\ep\na u_\ep\cdot\bn}\jm{v}\ds=\int_{\Om}fv,
\end{align}
where we have used the formula $\jm{vw}=\av{v}\jm{w}+\jm{v}\av{w}$ and the fact that \\
{$\jm{\mathbf{a}^\ep\na u_{\ep}\cdot\bn}=0$}.

We now define the DG bilinear form $a(\cdot,\cdot): H^s({\cal T}_h)\times H^s({\cal T}_h)\rightarrow \mathbf{R}:$
\begin{equation*}
\begin{split}
  a(u,v):&=\sum_{K\in {\cal T}_h} \int_{K}\mathbf{a}^\ep\na u\cdot\na v\dx\nn
      -\sum_{e\in\Ga_h\cup \pa \Om}\int_{e} \av{\mathbf{a}^\ep\na u\cdot\bn}\jm{v}\ds\nn\\
      &+\be \sum_{e\in\Ga_h\cup \pa \Om}\int_{e} \jm{u}\av{\mathbf{a}^\ep\na v\cdot\bn}\ds\nn
      +\sum_{e\in\Ga_h\cup \pa \Om}\frac{\ga_0}{\rho}\int_{e} \jm{u}\jm{v}\ds,\nn
\end{split}
\end{equation*}
where $\beta$ is a real number such as $-1, 0, 1$, $\gamma_0$ is called penalty parameter, and $\rho>0$ will be specified later.

The general DG variational formulation of the problem \eqref{eproblem} is as follows:
Find $u_\ep \in H^s({\cal T}_h)$, such that
\begin{equation}\label{weakformula}
      a(u_\ep,v)=(f,v) \qquad\forall\, v\in H^s({\cal T}_h).
\end{equation}
 \begin{remark}
It is easy to check that if the solution $u_\ep$ of problem \eqref{eproblem} belongs to $H^2(\Om)$, then $u_\ep$ satisfies the variational formulation \eqref{weakformula}.
Conversely, if $u_\ep \in H^1(\Om)\cap H^s({\cal T}_h)$ satisfies \eqref{weakformula}, then $u_\ep$ is the solution of problem \eqref{eproblem}.
\end{remark}

\section{Multiscale discontinuous Petrov-Galerkin method}
{This section is devoted to the formulations of multiscale discontinuous methods for solving \eqref{eproblem}. In subsection 3.1, we introduce the oversampling multiscale approximation space defined on the triangulation ${\cal T}_h$. The formulations of the MsDFE and MsDPG methods are presented in subsection 3.2.}

\subsection{Oversampling multiscale approximation space}
In this subsection, we introduce the oversampling multiscale approximation space defined on the triangulation ${\cal T}_h$ (cf. \cite{CW2010,EH2009,HW}). Here we only consider the case where $n=2$. For any $K\in {\cal T}_h$ with nodes
$\{x_i^K\}_{i=1}^3$, let $\{\varphi_i^K\}^3_{i=1}$ be the basis of
$P_1(K)$ satisfying $\varphi_i^K(x_j^K)=\delta_{ij}$, where
$\delta_{ij}$ stands for the Kroneckers symbol. For any $K\in
{\cal T}_h,$ we denote by $S=S(K)$ a macro-element which contains
$K$  {and $d_{K} = \text{dist}(\partial
S,K)$. We assume that
 $d_{K}\geq \delta_0 h_K$ for some
positive constant $\delta_0$ independent of $h_K$. The minimum
angle of $S(K)$ is bounded from below by some positive constant
$\theta_0$ independent of $h_K$. In our later numerical experiments,
for any coarse-grid element $K\in\T_{h}$ we put its macro-element
$S(K)$ in such a way that their barycenters are coincide and
their corresponding edges are parallel.
See Figure~\ref{figsample1} for an illustration.}

%
%
%
%
%
%
%

Let $\psi_i^S,i=1,2,3,$ with $\psi_i^S\in H^1(S)$, be the solution of
the problem:
\begin{equation}\label{overbase}
  -\nabla\cdot (\mathbf{a}^\ep\nabla\psi_i^S)=0\qquad \text{ in } S,\qquad \psi_i^S|_{\partial S}=\varphi_i^S.
\end{equation}
Here $\{\varphi_i^S\}_{i=1}^3$ is the nodal basis of $P_1(S)$ such
that $\varphi_i^S(x_j^S)=\delta_{ij},i,j=1,2,3.$

The oversampling
multiscale basis functions on $K$ are defined by
{\begin{equation}\label{coefcij1}
 \bar{ \psi_i}^K=\sum_{j=1}^{3}c_{ij}^K\psi_j^S|_K\qquad \text{ in } K,
\end{equation}
with the constants so chosen that
\begin{equation}\label{coefcij2}
  \varphi_i^K=\sum_{j=1}^{3}c_{ij}^K\varphi_j^S|_K\qquad \text{ in } K.
\end{equation}}
The existence of the constants $c_{ij}^K$ is guaranteed because
$\{\varphi_j^S\}_{j=1}^3$ also forms the basis of $P_1(K)$.
{To illustrate the basis functions, we depict two examples of them in Figure~\ref{fig:base} (cf. \cite{SDW2015}) .
\begin{figure}[htp]
  \begin{minipage}[t]{0.5\linewidth}
  \centerline{\includegraphics[scale=0.58]{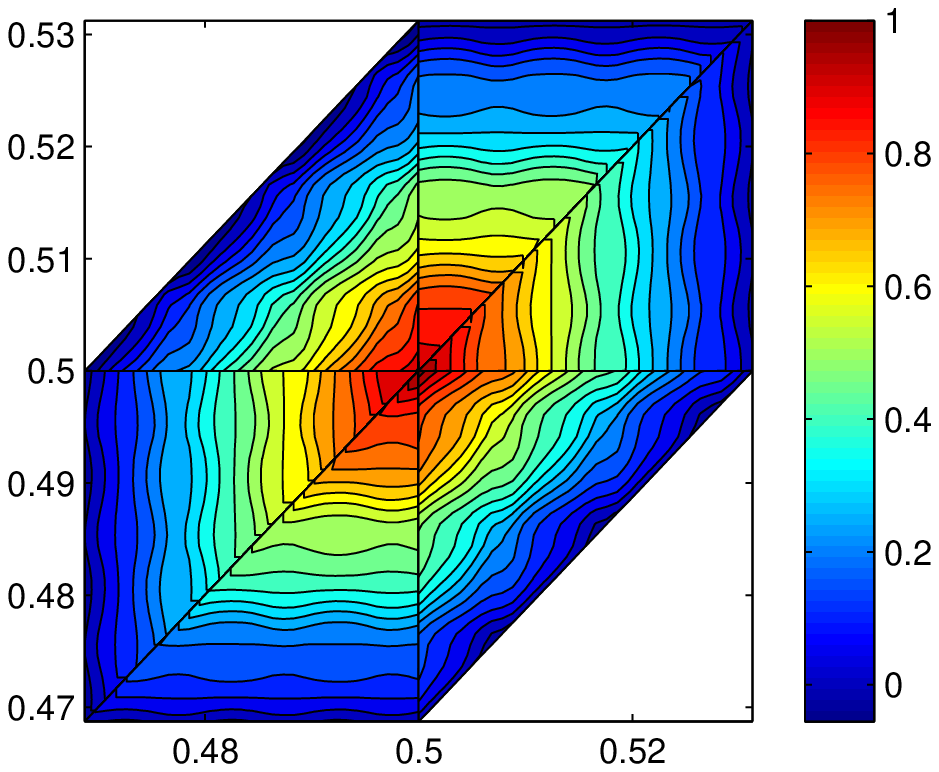}}
  \end{minipage}
  \begin{minipage}[t]{0.5\linewidth}
  \centerline{\includegraphics[scale=0.58]{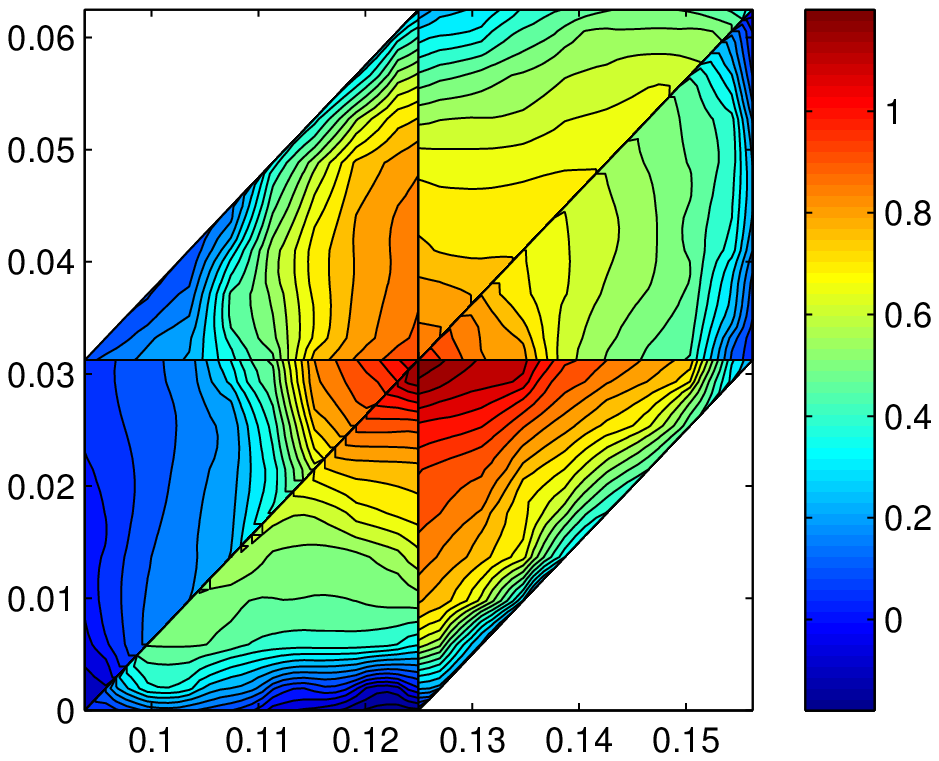}}
  \end{minipage}
\caption{Example of oversampling basis functions. Left: basis function for periodic media. Right: basis function for random media.  }
\label{fig:base}
\end{figure}}

Let $\text{OMS}(K)=\mathrm{span\,}\{\bar{\psi_i}^K\}_{i=1}^3$ { be the set of space functions on $K$.}
Define
the projection $\Pi_K:\text{OMS}(K)\rightarrow P_1(K)$ as follows:
\begin{equation*}
  \Pi_K\psi=c_i\varphi_i^K\qquad \text{ if }\qquad \psi=c_i\bar\psi_i^K\in \text{OMS}(K).
\end{equation*}


{Further, we introduce the discontinuous piecewise ``OMS'' approximation space and the discontinuous piecewise linear space:}
{\begin{equation*}
\begin{split}
  V_{h,dc}^{ms}&=\{\psi_h\in L^2(\Om):\psi_h|_K\in\text{OMS}(K)\qquad \forall K\in {\cal T}_h \},\\
  V_{h,dc}&=\{v_h\in L^2(\Om): v_h|_K\in P_1(K)\qquad \forall K\in {\cal T}_h \}.
  \end{split}
\end{equation*}}
Here we use the abbreviated indexes `ms', `dc' for multiscale, discontinuous, respectively.

{\subsection{Formulation of the MsDFEM/MsDPGM}
In this subsection we present the formulations of the MsDFEM and MsDPGM.

 By use of the DG variational formulation~(\ref{weakformula}) and the discontinuous piecewise ``OMS'' approximation space, we are now ready to define
the MsDFE method:
Find $\widetilde{u}_{h}\in V_{h,dc}^{ms}$ such that
\begin{equation}\label{eifem}
a(\widetilde{u}_{h}, v_{h}) =(f, v_{h}) \qquad\forall\, v_{h}\in V_{h,dc}^{ms}.
\end{equation}

}

To define the discrete bilinear form for MsDPGM, we need the transfer operator $\Pi_h: V_{h,dc}^{ms} \rightarrow V_{h,dc}$ as following:
\begin{equation*}
    \Pi_h\psi_h|_K=\Pi_K\psi_h\ \ \mbox{for any $K\in{\cal T}_h , \psi_h\in V_{h,dc}^{ms}$}.
\end{equation*}

\begin{remark}
{ In general, the trial and test functions of PGM are not in the same space. For example, here we might use $V_{h,dc}^{ms}$ and $V_{h,dc}$ as the trial function and test function spaces respectively. However, it may result in a difficulty to prove the inf-sup condition of the corresponding bilinear form. Hence, in this paper we introduce the transfer operator $\Pi_h$ to connect the two spaces, and use it in the bilinear form which causes an easy way to establish the stability of the MsDPGM. The idea of connecting the trial function and test function spaces in the Petrov-Galerkin method through an operator was introduced in \cite{Chou1997Analysis} (see also \cite{Morton1980Petrov}). }
\end{remark}

The discrete bilinear form of MsDPGM on $V_{h,dc}^{ms}\times V_{h,dc}^{ms}$ is defined as:
\begin{align}\label{eA}
a_{h}(u_{h},v_{h}) :=&\sum_{K\in {\cal T}_{h}} \int_{K}\mathbf{a}^\ep\na u_{h}\cdot\na \Pi_h v_{h}\dx\nn\\
&-\sum_{e\in\Ga_h\cup \pa \Om}\int_{e} \av{\mathbf{a}^\ep\na u_{h}\cdot\bn}\jm{\Pi_h v_{h}}\ds\nn\\
&+\be \sum_{e\in\Ga_h\cup \pa \Om}\int_{e} \jm{\Pi_h u_{h}}\av{\mathbf{a}^\ep\na v_{h}\cdot\bn}\ds\nn\\
&+  J_0(u_{h},v_{h}) ,\nn \\
J_0(u_{h},v_{h}):=&\sum_{e\in\Ga_h\cup \pa \Om}\frac{\ga_0}{\rho}\int_{e} \jm{\Pi_h u_{h}}\jm{\Pi_h v_{h}}\ds,\nn
\end{align}
where $\beta$ is a real number such as $-1, 0, 1$, { $\gamma_0$ is the penalty parameter, and $\rho$ will be specified later.}

{\begin{remark}
It is well known that DG methods utilize discontinuous piecewise
polynomial functions and numerical fluxes, which implies that the
weak formulation subject to discretization must include jump terms
across interfaces and that some penalty terms must be added to
control the jump terms. Therefore, the methods need the restriction on the penalty parameter to ensure stability and convergence in some sense. In fact, the optional convergence is related with the penalty parameter (see \cite{Epshteyn2007Estimation}).
In this paper, our MsDPGM takes advantage of the penalty technique, which is inevitable to consider the choice of the penalty parameter. In our theoretical analysis, the penalty parameter $\gamma_0$ is constrained by a large constant from below to ensure the coercivity of $a_h$. However, in practice, the penalty parameter is chosen through our experience. In later numerical tests, we try different choice of the penalty parameter to study its affection on the error.
\end{remark}
\begin{remark}
The parameter $\rho$ is chosen as $\epsilon$ in our later error analysis, while in practical computation, it may be chosen as the mesh size $h$.
\end{remark}
}

Then, our MsDPG method is:
 Find $u_{h}\in V_{h,dc}^{ms}$ such that
\begin{equation}\label{eipgfem}
a_h(u_{h}, v_{h}) =(f, \Pi_hv_{h}) \qquad\forall\, v_{h}\in V_{h,dc}^{ms}.
\end{equation}

\begin{remark}
The design of the last two terms in $a_h$ is tricky. As a matter
of fact, we have tried numerically different possibilities of using
$\Pi_h$ ( or not before each $u_{h}$ or $v_{h}$ ) before we found
that the current form of $a_h$ is the best one and, most
importantly, the corresponding MsDPGM can be analyzed theoretically. {Indeed, our MsDPGM is some kind of pseudo Petrov-Galerkin formulation of the method that the test function space is
formally the same as the solution space, however some terms
involve a projection of the multiscale test function into a piecewise linear function space. }
\end{remark}

We denote the discrete norm for MsDPGM on $V_{h,dc}^{ms}$,
 \begin{equation*}
 \begin{split}
 \norm{v_h}_{h,\Om}:=&\Big(\sum_{K\in\T_h}\int_K \mathbf{a}^\ep \na v_h \cdot \na v_h \dx+\sum_{e\in\Ga_h\cup \pa \Om}\frac{\rho}{\ga_0}\int_{e} \av{\mathbf{a}^\ep \na v_h\cdot\bn}^2\ds\\
  &\qquad\qquad+\sum_{e\in\Ga_h\cup \pa \Om}\frac{\ga_0}{\rho}\int_{e} \jm{\Pi_hv_h}^2\ds
 \Big)^{1/2}.
 \end{split}
 \end{equation*}

Noting that the operator $\Pi_h$ is not defined for the exact solution $u_\ep$,  we introduce the following function to measure the error of the discrete solution:
\begin{equation}\label{err}
    \begin{split}
      E(v,v_{h}):=&\bigg(\sum_{K\in\T_{h}}\|(\mathbf{a}^\ep)^{1/2}\na (v-v_{h})\|_{L^2(K)}^2\\
&+\sum_{e\in\Ga_h\cup \pa \Om} \frac{\rho}{\ga_0}\norml{\av{\mathbf{a}^\ep\na (v-v_{h})\cdot\bn}}{e}^2\\
&+\sum_{e\in\Ga_h\cup \pa \Om} \frac{\ga_0}{\rho}\norml{\jm{v-\Pi_h v_{h}}}{e}^2\bigg)^{1/2}\quad\forall v\in H^2(\Om), v_{h}\in V_{h,dc}^{ms}.
    \end{split}
\end{equation}

From the triangle inequality, it is clear that, for any  $v\in H^2(\Om), v_{h}, w_{h}\in V_{h,dc}^{ms}$,
\begin{align}\label{errtri}
E(v,v_{h})\ls E(v,w_{h})+\norm{w_{h}-v_{h}}_{h,\Om}.
\end{align}

{\section{Homogenization results and preliminaries}
In this section we first review the results of classical homogenization theory, and give an important $H^2$ norm error estimate between the multiscale solution and the homogenized solution with the first order corrector. Then we recall some preliminaries for our following analysis.
\subsection{The homogenization results}
Hereafter, we assume that $\mathbf{a}^\varepsilon(x)$ has the
form $\mathbf{a}(x/\varepsilon)$ and
$a_{ij}(y)$ are sufficiently smooth periodic functions in $y$ with respect to a
unit cube $Y$. For our analysis it is sufficient to assume that
$a_{ij}(y)\in W^{2,p}(Y)$ with $p>n$.




For convenience sake, we take
\begin{equation*}
    u_1=u_0+\ep\chi^j(x/\varepsilon) \frac{\pa u_0}{\pa x_j},
\end{equation*}
where $u_0$ is the homogenized solution, $\chi^j$ is the periodic solution of the following cell problem (cf. \cite{BLP,JKO}):
\begin{equation}
-\nabla_y\cdot(\mathbf{a}(y)\nabla_y
\chi^j(y))=\nabla_y\cdot(\mathbf{a}(y)\mathbf{e}_j),\quad
j=1,\cdots,n
\end{equation}
with zero mean, i.e., $\int_Y\chi^jdy=0$, and $\mathbf{e}_j$ is the
unit vector in the $j$th direction.

The following theorem gives the $H^2$ semi-norm estimate of the error $u_\ep-u_1$, which plays a key role in the later error analysis.{We arrange the proof in the Appendix~\ref{AA}.}


\begin{Theorem}\label{h2homoerror}
Assume that $u_0\in H^3(\Om)$. Then the following estimate is valid:
\begin{equation}\label{h2homoestimate}
\abs{u_\ep-u_1}_{H^2(\Om)}\ls
\abs{u_0}_{H^2(\Om)}+\frac{1}{\sqrt{\ep}}\abs{u_0}_{W^{1,\infty}(\Om)}+\ep\abs{u_0}_{H^3(\Om)}.
\end{equation}
\end{Theorem}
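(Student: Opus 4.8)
The plan is to exhibit the first--order corrector $u_1$ as an \emph{approximate} solution of \eqref{eproblem} whose dangerous $O(\ep^{-1})$ residual is annihilated by the cell problem, and then to convert an $L^2$ residual bound together with the classical first--order $H^1$ corrector estimate into the $H^2$ bound \eqref{h2homoestimate} through the differential equation itself. The essential difficulty to keep in mind is that $\abs{u_\ep}_{H^2(\Om)}$ and $\abs{u_1}_{H^2(\Om)}$ are each of size $\ep^{-1}$, while their leading oscillatory Hessians coincide; hence one must estimate the difference $z:=u_\ep-u_1$ directly and cannot argue term by term.

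First I would compute the residual $\mathcal R:=f+\na\cdot(\mathbf a^\ep\na u_1)$. Using $\pa_j u_1=\pa_j u_0+(\pa_{y_j}\chi^k)(x/\ep)\pa_k u_0+\ep\,\chi^k(x/\ep)\pa_j\pa_k u_0$, the $i$--th flux component becomes $\tilde b_{ik}(x/\ep)\pa_k u_0+\ep\,a_{ij}(x/\ep)\chi^k(x/\ep)\pa_j\pa_k u_0$, where $\tilde b_{ik}(y):=a_{ik}(y)+a_{ij}(y)\pa_{y_j}\chi^k(y)$. Taking the divergence produces the term $\ep^{-1}(\pa_{y_i}\tilde b_{ik})(x/\ep)\pa_k u_0$, which vanishes identically because the cell problem is precisely $\pa_{y_i}\tilde b_{ik}=0$. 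Subtracting the homogenized identity $-a^0_{ik}\pa_i\pa_k u_0=f$ (valid since $a^0_{ik}=\langle\tilde b_{ik}\rangle$ is constant) then gives
\begin{equation*}
\mathcal R=(\tilde b_{ik}(x/\ep)-a^0_{ik})\pa_i\pa_k u_0+(\pa_{y_i}(a_{ij}\chi^k))(x/\ep)\,\pa_j\pa_k u_0+\ep\,a_{ij}(x/\ep)\,\chi^k(x/\ep)\,\pa_i\pa_j\pa_k u_0.
\end{equation*}
Since $a_{ij}\in W^{2,p}(Y)$ with $p>n$ forces $\chi^k\in W^{2,p}(Y)\hookrightarrow C^1(\overline Y)$, all coefficients above are bounded, whence $\norml{\mathcal R}{\Om}\ls\abs{u_0}_{H^2(\Om)}+\ep\abs{u_0}_{H^3(\Om)}$.

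Next, the two equations give $-\na\cdot(\mathbf a^\ep\na z)=\mathcal R$, and expanding the divergence yields the pointwise identity $a_{ij}(x/\ep)\pa_i\pa_j z=-\mathcal R-\ep^{-1}(\pa_{y_i}a_{ij})(x/\ep)\pa_j z$. I would then invoke the classical first--order corrector estimate (cf. \cite{JKO,BLP}) in the form $\norml{\na z}{\Om}\ls\sqrt\ep\,\abs{u_0}_{W^{1,\infty}(\Om)}+\ep\abs{u_0}_{H^2(\Om)}$, where the $\sqrt\ep$ factor is exactly the contribution of the $O(\ep)$--wide boundary strip on which $u_1$ violates the Dirichlet condition; this is obtained by an energy argument after multiplying by a cutoff that suppresses that strip. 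Feeding this into the identity, the singular source $\ep^{-1}\norml{\na z}{\Om}$ is controlled by $\ep^{-1/2}\abs{u_0}_{W^{1,\infty}(\Om)}+\abs{u_0}_{H^2(\Om)}$, which combined with $\norml{\mathcal R}{\Om}$ yields precisely the three terms on the right of \eqref{h2homoestimate}.

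The main obstacle is the final step: recovering the full Hessian $\abs{z}_{H^2(\Om)}$ from the single contracted quantity $a_{ij}(x/\ep)\pa_i\pa_j z$. Naive interior $H^2$ elliptic regularity would lose a factor $\ep^{-1}$ through the $C^1$ norm of the rapidly oscillating coefficient, which is fatal. I would circumvent this either by applying a localized regularity estimate on balls of radius $\sim\ep$ and rescaling each to the unit cell (so the coefficient becomes $\ep$--independent and the constant uniform), then summing over a covering of $\Om$; or by carrying a refined two--scale expansion $u_\ep=u_1+\ep^2\chi^{jk}(x/\ep)\pa_j\pa_k u_0+\cdots$ whose remainder is genuinely bounded in $H^2$ and whose boundary layer again accounts for the $\ep^{-1/2}$ loss. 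In either route the delicate point is the boundary: near $\pa\Om$ one cannot integrate by parts freely since $z\neq0$ and $\pa z/\pa\bn\neq0$ there, and it is exactly this boundary contribution that produces the non--removable $\ep^{-1/2}\abs{u_0}_{W^{1,\infty}(\Om)}$ term.
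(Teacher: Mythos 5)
Your computation of the residual is correct and of the right size, and your reduction to the pointwise identity $a_{ij}(x/\ep)\pa_i\pa_j z=-\mathcal R-\ep^{-1}(\pa_{y_i}a_{ij})(x/\ep)\pa_j z$ together with the first--order $H^1$ corrector bound does produce exactly the three terms of \eqref{h2homoestimate} as an $L^2$ bound on the \emph{source}. (The paper does the same bookkeeping in divergence form, writing $\na\cdot(\mathbf a^\ep\na z)=\ep\,\pa_i\bigl((a_{ij}\chi^k-\al_{ij}^k)\pa_j\pa_k u_0\bigr)$ via the skew--symmetric flux correctors $\al_{ij}^k$ with $G_i^k=\pa_{y_j}\al_{ij}^k$; this is equivalent to your cancellation of the $\ep^{-1}$ term by the cell problem.) The genuine gap is the step you yourself flag as ``the main obstacle'': you never actually convert the $L^2$ bound on $a_{ij}\pa_i\pa_j z$ into a bound on $\abs{z}_{H^2(\Om)}$ — you only name two possible strategies without executing either, and it is precisely there that all the difficulty of the theorem is concentrated. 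Your rescaling--and--covering argument does work in the interior: on balls $B_\ep$ it yields $\norml{\na^2 z}{B_{\ep/2}}\ls\ep^{-1}\norml{\na z}{B_\ep}+\norml{F}{B_\ep}$ with uniform constants, and summing gives the interior Hessian bound. But a covering of $\Om$ by such balls necessarily reaches the $O(\ep)$--strip along $\pa\Om$, where $z=-\ep\chi^j(x/\ep)\pa_j u_0\neq 0$ on the boundary, interior estimates do not apply, and no boundary $H^2$ mechanism is offered. That strip cannot be discarded: it is where the boundary--layer behaviour lives, and with no control there the claimed estimate is not established.

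For comparison, the paper's proof is organized entirely around this boundary issue. It introduces the boundary corrector $\theta_\ep$ solving $-\na\cdot(\mathbf a^\ep\na\theta_\ep)=0$ with $\theta_\ep=-\chi^j(x/\ep)\pa_j u_0$ on $\pa\Om$, so that $w:=u_\ep-u_1-\ep\theta_\ep\in H_0^1(\Om)$ satisfies the residual equation with homogeneous Dirichlet data; a global $H^2$ a priori estimate (Grisvard, Theorem 4.3.1.4) of the form $\abs{w}_{H^2(\Om)}\ls\ep^{-1}\norml{u_\ep-u_1}{\Om}+\abs{u_0}_{H^2(\Om)}+\ep\abs{u_0}_{H^3(\Om)}$ then applies. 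The term $\ep^{-1/2}\abs{u_0}_{W^{1,\infty}(\Om)}$ is produced in a separate lemma estimating $\abs{\ep\theta_\ep}_{H^2(\Om)}$ by a cutoff argument on the $\ep$--strip. To complete your proof you would need either to introduce an analogue of $\theta_\ep$ (at which point you are essentially reproducing the paper's argument) or to prove a genuine boundary $H^2$ estimate on the $\ep$--neighbourhood of $\pa\Om$ with the correct $\ep$--dependence; as written, neither is done.
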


\subsection{Preliminaries}
In this subsection, we give some preliminaries for our later analysis. We first recall the definition of $\psi_i^S, i=1,2,3$ (see \eqref{overbase}).
By the asymptotic expansion (cf. \cite{EHW,HWZh}), we know that
\begin{equation}\label{asy}
    \psi_i^S=\varphi _i^S+\ep \chi^j(x/\varepsilon) \frac{\pa \varphi_i^S}{\pa x_j}+\ep \eta^j(x) \frac{\pa \varphi _i^S}{\pa x_j},
\end{equation}
with $\eta^j$ being the solution of
\begin{equation}\label{etadef}
    -\na \cdot(\mathbf{a}^\ep\na\eta^j) =0  \qquad \text{ in } S,\qquad \eta^j|_{\partial S}=-\chi^j(x/\varepsilon).
\end{equation}
Substituting \eqref{asy} to \eqref{coefcij1}, we see that $\bar{
\psi_i}^K$ can be expanded as follows:
\begin{equation}\label{over}
\bar{ \psi_i}^K=\varphi _i^K+\ep \chi^j(x/\varepsilon) \frac{\pa \varphi _i^K}{\pa x_j}+\ep \eta^j(x) \frac{\pa \varphi _i^K}{\pa x_j}.
\end{equation}
{Recall that $d_{K} = \text{dist}(\partial
S,K)$, which satisfies:
$ d_K\geq \delta_0 h_K$.}  Denote by $d=\text{min}_{K\in \T_h}d_{K}$.
{\begin{remark}
It has been shown in \cite{HW,HWZh} that the distance $d_K$ is determined by the thickness of the boundary layer of $\eta^j$. Numerically, it has been observed that the boundary layer of $\eta^j$ is about $O(\epsilon)$ thick (see \cite{HW}). It was also observed that $d_K = h_K (> \epsilon)$ is usually sufficient for eliminating the boundary layer effect. Therefore in our numerical tests we choose $h_K$ as the oversampling size in this paper. To study how the size of oversampling elements affects the errors, in Section~\ref{numericaltest} we include a numerical test which use a series of  $d_K$ with different $\delta_0$ to compare the corresponding errors.
\end{remark}
}

By the Maximum Principle we have
\begin{equation}\label{eq1}
\norm{\eta^{j}}_{L^\infty(S)}\leq \abs{\chi^{j}}_{L^\infty(S)}\lesssim 1,
\end{equation}
which together with the interior gradient estimate (see \cite[Lemma 3.6]{DW2014} or \cite[Proposition C.1]{EHW}) imply that
\begin{equation}\label{eq2}
\norm{\na \eta^{j}}_{L^\infty(K)} \lesssim \frac{1}{d_{K}}.
\end{equation}

Next, we give a trace inequality which will be used in this paper frequently
(see \cite[Theorem 1.6.6]{SUSC}, \cite{Ciarlet}).
\begin{lemma}\label{trace1}Let $K$ be an element of the triangulation ${\cal T}_h$. Then, for any $v\in H^1(K)$, we have
\begin{equation}\label{trace}
    \norml{v}{\partial K}\leq C\left(\emph{diam}(K)^{-1/2}\norml{v}{K}
    +\norml{v}{K}^{1/2}\norml{\na v}{K}^{1/2}\right).
\end{equation}
\end{lemma}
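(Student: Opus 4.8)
The plan is to prove the equivalent squared estimate
\begin{equation*}
\norml{v}{\pa K}^2 \ls h_K^{-1}\norml{v}{K}^2 + \norml{v}{K}\,\norml{\na v}{K},
\end{equation*}
from which \eqref{trace} follows by taking square roots and using $\sqrt{a+b}\le\sqrt a+\sqrt b$ together with $h_K=\text{diam}(K)$. Since $C^\infty(\bar K)$ is dense in $H^1(K)$ and both sides depend continuously on $v$ in the $H^1(K)$ norm (the trace map $H^1(K)\to L^2(\pa K)$ being bounded), it suffices to establish the squared estimate for $v\in C^\infty(\bar K)$ and then pass to the limit.

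First I would introduce an auxiliary vector field adapted to $K$. Taking $x_K$ to be the incenter of the simplex $K$ and setting $\mathbf{m}(x)=x-x_K$, one has on each face of $\pa K$ the exact identity $\mathbf{m}\cdot\bn_K=r_K$, where $r_K$ is the inradius and $\bn_K$ the outward unit normal, while $\na\cdot\mathbf{m}=n$ and $|\mathbf{m}|\le h_K$ on $K$. Applying the divergence theorem to $v^2\mathbf{m}$ gives
\begin{equation*}
r_K\,\norml{v}{\pa K}^2=\int_{\pa K} v^2\,(\mathbf{m}\cdot\bn_K)\ds=\int_K\big(2v\,\na v\cdot\mathbf{m}+n\,v^2\big)\dx.
\end{equation*}
Bounding the right-hand side by Cauchy--Schwarz and $|\mathbf{m}|\le h_K$ yields $r_K\,\norml{v}{\pa K}^2\le 2h_K\norml{v}{K}\norml{\na v}{K}+n\norml{v}{K}^2$. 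Shape regularity of $\T_h$ (the minimum-angle / quasi-uniformity hypothesis) provides $r_K\ge c_0 h_K$ with $c_0>0$ independent of $K$ and $h$; dividing by $r_K$ and using $1/r_K\le 1/(c_0 h_K)$ then gives the squared estimate with a constant depending only on $n$ and $c_0$.

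The hard part will be the uniform comparability $r_K\ge c_0 h_K$, since this is the only place where the geometry of the mesh enters and where one must verify that $c_0$, and hence the constant $C$ in \eqref{trace}, is independent of $K$ and of $h$. The content is the elementary but essential geometric fact that for a non-degenerate simplex the inradius is comparable to the diameter, with comparability constant controlled by the minimum angle $\theta_0$ of the element; once this is secured, the remaining manipulations are routine. An alternative would be to pull back to a fixed reference element, invoke the classical additive trace theorem there, and scale back, but the divergence-theorem argument above avoids tracking the separate scalings of $\norml{\cdot}{\pa K}$, $\norml{\cdot}{K}$ and $\norml{\na\cdot}{K}$ and produces the multiplicative form in \eqref{trace} directly.
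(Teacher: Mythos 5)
Your argument is correct. Note, however, that the paper does not actually prove Lemma \ref{trace1}: it simply cites \cite[Theorem 1.6.6]{SUSC} and \cite{Ciarlet}, so there is no in-paper proof to match. Your divergence-theorem argument with the incenter field $\mathbf{m}(x)=x-x_K$ is a clean, self-contained substitute, and every step checks out: $\mathbf{m}\cdot\bn_K=r_K$ on each face because the incenter is equidistant from the face hyperplanes, $\na\cdot\mathbf{m}=n$, $|\mathbf{m}|\le h_K$, and Cauchy--Schwarz plus $r_K\geq c_0 h_K$ (which follows from the quasi-uniformity/minimum-angle hypothesis on $\T_h$) give the squared estimate, from which \eqref{trace} follows by $\sqrt{a+b}\le\sqrt{a}+\sqrt{b}$. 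The one comparison worth making is that the cited Brenner--Scott theorem is stated for a fixed Lipschitz domain, so to obtain the $h_K$-explicit constants in \eqref{trace} one must still pull back to a reference element and rescale, tracking the different scalings of $\norml{\cdot}{\pa K}$, $\norml{\cdot}{K}$ and $\norml{\na\cdot}{K}$; your direct computation on the physical element produces the correct $\mathrm{diam}(K)^{-1/2}$ and multiplicative $\norml{v}{K}^{1/2}\norml{\na v}{K}^{1/2}$ structure in one pass, at the cost of having to verify the elementary geometric fact $r_K\gtrsim h_K$, which you correctly identify as the only place the mesh regularity enters. The density step is also fine: the qualitative boundedness of the trace map justifies passing to the limit, and even without it the estimate for smooth functions shows the trace extends continuously. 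No gaps.
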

The following lemma gives some approximation properties of the space
OMS(K) (cf. \cite[Lemma 4.1]{DW2014}).
\begin{lemma} \label{lmultiapp}Take $\phi_{h}^K=\sum\limits_{x_i^K\, \text{node of}\ K}u_0(x_i^K)\bar\psi_i^K(x),\quad  \forall K\in {\cal T}_h$. Then, the following estimates hold:
\begin{align}
\abs{u_1-\phi_{h}^K}_{H^1(K)}&\ls h_K \abs{u_0}_{H^2(K)}
   +{\eps}h_{K}^{n/2}d_{K}^{-1}\abs{u_0}_{W^{1,\infty}(K)},\label{lmultiappl1}\\
   \norm{u_1-\phi_{h}^K}_{L^2(K)}&\ls h_K^2\abs{u_0}_{H^2(K)}
   +{\eps}h_{K}^{n/2}\abs{u_0}_{W^{1,\infty}(K)},\label{lmultiappl2}\\
    \abs{u_1-\phi_{h}^K}_{H^2(K)}&\ls \ep^{-1}h_K\abs{u_0}_{H^2(K)}+h_{K}^{n/2}d_{K}^{-1}\abs{u_0}_{W^{1,\infty}(K)}
   +\ep\abs{u_0}_{H^3(K)}.\label{lmultiappl3}
\end{align}
\end{lemma}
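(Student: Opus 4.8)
The plan is to expand $u_1-\phi_h^K$ using the explicit asymptotic form \eqref{over} of the oversampling basis and then estimate the resulting pieces term by term. First I would introduce the piecewise-linear nodal interpolant $u_0^I=\sum_i u_0(x_i^K)\varphi_i^K$ on $K$. Since $\frac{\pa u_0^I}{\pa x_j}=\sum_i u_0(x_i^K)\frac{\pa\varphi_i^K}{\pa x_j}$, substituting \eqref{over} into the definition of $\phi_h^K$ gives $\phi_h^K = u_0^I + \ep\chi^j(x/\ep)\frac{\pa u_0^I}{\pa x_j}+\ep\eta^j\frac{\pa u_0^I}{\pa x_j}$. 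Comparing with $u_1 = u_0+\ep\chi^j(x/\ep)\frac{\pa u_0}{\pa x_j}$ yields the three-term splitting
\[
u_1-\phi_h^K = (u_0-u_0^I) + \ep\chi^j(x/\ep)\frac{\pa(u_0-u_0^I)}{\pa x_j} - \ep\eta^j\frac{\pa u_0^I}{\pa x_j} =: \mathrm{I}+\mathrm{II}-\mathrm{III}.
\]
The structural simplification I would exploit throughout is that $u_0^I$ is affine on $K$, so all its second and higher derivatives vanish and $\frac{\pa u_0^I}{\pa x_j}$ is a constant with $|\na u_0^I|_{L^\infty(K)}\ls |u_0|_{W^{1,\infty}(K)}$.

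For term $\mathrm{I}$ I would invoke the standard interpolation estimates $|u_0-u_0^I|_{H^1(K)}\ls h_K|u_0|_{H^2(K)}$, $\|u_0-u_0^I\|_{L^2(K)}\ls h_K^2|u_0|_{H^2(K)}$, and $|u_0-u_0^I|_{H^2(K)}=|u_0|_{H^2(K)}$. For term $\mathrm{II}$ I would use the periodicity and smoothness of $\chi^j$ (so that $\chi^j$, $\na_y\chi^j$, $\na_y^2\chi^j$ are bounded in $L^\infty$, guaranteed by $a_{ij}\in W^{2,p}(Y)$, $p>n$), keeping in mind that each $x$-derivative of $\chi^j(x/\ep)$ produces a factor $\ep^{-1}$. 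Differentiating the product and discarding the vanishing derivatives of $u_0^I$, term $\mathrm{II}$ contributes at most $h_K|u_0|_{H^2}$ in $H^1$, $\ep h_K|u_0|_{H^2}$ in $L^2$, and $\ep^{-1}h_K|u_0|_{H^2(K)}+\ep|u_0|_{H^3(K)}$ in $H^2$; the intermediate pieces are absorbed under the mild regime assumption $\ep\ls h_K$. For term $\mathrm{III}$ the factor $\frac{\pa u_0^I}{\pa x_j}$ is constant, so I only need norms of $\eta^j$: from \eqref{eq1} ($\|\eta^j\|_{L^\infty(S)}\ls 1$), \eqref{eq2} ($\|\na\eta^j\|_{L^\infty(K)}\ls d_K^{-1}$), and $|K|\ls h_K^n$, term $\mathrm{III}$ contributes $\ep h_K^{n/2}|u_0|_{W^{1,\infty}}$ in $L^2$ and $\ep h_K^{n/2}d_K^{-1}|u_0|_{W^{1,\infty}}$ in $H^1$, exactly matching the boundary-layer terms in \eqref{lmultiappl1}--\eqref{lmultiappl2}.

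I expect the $H^2$-seminorm bound on term $\mathrm{III}$ to be the main obstacle, since it requires $\|\na^2\eta^j\|_{L^2(K)}$, for which the first-order interior bound \eqref{eq2} alone does not suffice. Because $\eta^j$ solves $-\na\cdot(\mathbf{a}^\ep\na\eta^j)=0$ with coefficient oscillating at scale $\ep$ (so $\|\na\mathbf{a}^\ep\|_{L^\infty}\ls \ep^{-1}$), differentiating the equation for $w=\pa_k\eta^j$ and running an interior (Caccioppoli-type) energy estimate on a slightly enlarged region $S'$ still at distance $\sim d_K$ from $\pa S$ gives $\|\na^2\eta^j\|_{L^2(K)}\ls (\ep^{-1}+d_K^{-1})\|\na\eta^j\|_{L^2(S')}\ls \ep^{-1}\|\na\eta^j\|_{L^2(S')}$, where the $\ep^{-1}$ term dominates the cutoff contribution since $h_K>\ep$. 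Combining with \eqref{eq2} yields $\|\na^2\eta^j\|_{L^2(K)}\ls \ep^{-1}h_K^{n/2}d_K^{-1}$; multiplying by the constant $\frac{\pa u_0^I}{\pa x_j}$ and the prefactor $\ep$ cancels the $\ep^{-1}$ and produces precisely $h_K^{n/2}d_K^{-1}|u_0|_{W^{1,\infty}(K)}$, the second term in \eqref{lmultiappl3}. Finally, assembling $\mathrm{I},\mathrm{II},\mathrm{III}$ via the triangle inequality and absorbing lower-order contributions gives all three claimed estimates. The delicate point to make rigorous is this localized second-order estimate for $\eta^j$ with its explicit $\ep^{-1}$ dependence.
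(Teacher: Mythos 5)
The paper does not prove this lemma itself --- it is quoted verbatim from \cite[Lemma 4.1]{DW2014} --- so there is no in-paper proof to compare against. Your argument is nonetheless correct and follows the standard route used in that reference and in the Hou--Wu--Cai line of analysis: the splitting $u_1-\phi_h^K=(u_0-u_0^I)+\ep\chi^j\pa_j(u_0-u_0^I)-\ep\eta^j\pa_j u_0^I$ is exactly the right one, the interpolation and $\chi^j$-regularity estimates are handled correctly (with $a_{ij}\in W^{2,p}(Y)$, $p>n$, giving $\chi^j\in C^2$), and you correctly identify the only genuinely delicate point, namely $\norml{\na^2\eta^j}{K}$. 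Your Caccioppoli estimate for $w=\pa_k\eta^j$, giving $\norml{\na^2\eta^j}{K}\ls(\ep^{-1}+d_K^{-1})\norml{\na\eta^j}{S'}\ls \ep^{-1}d_K^{-1}h_K^{n/2}$, is sound; just note explicitly that it uses $\ep\ls h_K\ls d_K$ (so the cutoff term $d_K^{-1}$ is absorbed into $\ep^{-1}$) and that \eqref{eq2} must be applied on the intermediate set $S'$ rather than on $K$ --- both harmless since the intermediate region is still at distance $\eqsim d_K$ from $\pa S$, and the standing hypotheses of the surrounding lemmas already assume this regime.
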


Moreover, we recall the stability estimate for $\Pi_K$, which will be
used in our later analysis (cf. Lemma 3.2 in \cite{SDW2015}).
\begin{lemma}\label{stab1}
  There exist positive constants $\ga$, $\alpha_1$ and $\alpha_2$ which are independent of $h$ and $\eps$ such that
 if $\eps/h_K\leq\ga $ for all $K\in\T_h$, then the following estimates are valid for any  $v_h\in \text{OMS}(K)$,
\begin{eqnarray}
 &&\norm{\na v_h}_{L^2(K)}
   \eqsim \norm{\na \Pi_K v_h}_{L^2(K)},\label{stab2}\\
 && \alpha_2\norml{\na v_{h}}{K}^2 \le \abs{\int_{K}\mathbf{a}^\ep\na v_{h}\cdot\na
    \Pi_K v_h\dx}\le \alpha_1\norml{\na v_{h}}{K}^2. \label{stab3}
\end{eqnarray}
\end{lemma}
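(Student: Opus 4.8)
The plan is to base everything on the asymptotic expansion \eqref{over}. Writing $v_h=c_i\bar\psi_i^K$ and $p:=\Pi_K v_h=c_i\varphi_i^K\in P_1(K)$, expansion \eqref{over} gives $v_h-p=\ep\,\xi_j\big(\chi^j(x/\ep)+\eta^j(x)\big)$, where $\xi:=\na p$ is the (constant) gradient of the linear function $p$, so by \eqref{eq1} one has $|v_h-p|\ls\ep|\xi|$ pointwise. Differentiating, $\na v_h=\xi+\xi_j(\na_y\chi^j)(x/\ep)+R$ with $R:=\ep\,\xi_j\na\eta^j$, and \eqref{eq2} together with $d_K\ge\de_0 h_K$ yields $\norml{R}{K}\ls(\ep/h_K)\norml{\na p}{K}$. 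Note that $\norml{\na p}{K}\eqsim|\xi|\,h_K^{n/2}$ since $\xi$ is constant.

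For \eqref{stab2}, the upper bound $\norml{\na v_h}{K}\ls\norml{\na p}{K}$ follows from the triangle inequality together with $\norml{(\na_y\chi^j)(\cdot/\ep)}{K}\ls|\xi|\,h_K^{n/2}$; here the regularity $a_{ij}\in W^{2,p}(Y)$, $p>n$, enters, since it makes $\na_y\chi^j$ bounded. The matching lower bound I would get by averaging: integrating over $K$, the oscillatory corrector $(\na_y\chi^j)(x/\ep)$ has zero mean over $Y$, so $\int_K(\na_y\chi^j)(x/\ep)\dx$ only picks up an $O(\ep h_K^{n-1})$ boundary-layer contribution, and the same holds for $R$; hence $\big|\int_K\na v_h\dx-|K|\,\xi\big|\ls\ep h_K^{n-1}|\xi|$. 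Since $|K|\eqsim h_K^n$, choosing $\ga$ small enough that $\ep/h_K\le\ga$ absorbs the error term and gives $\norml{\na p}{K}\ls\norml{\na v_h}{K}$, completing \eqref{stab2}.

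For \eqref{stab3} the upper bound is immediate from Cauchy--Schwarz, the bound $\mathbf{a}^\ep\le\La$, and \eqref{stab2}. For the lower bound I would write $\int_K\mathbf{a}^\ep\na v_h\cdot\na p=\int_K\mathbf{a}^\ep\na v_h\cdot\na v_h-\int_K\mathbf{a}^\ep\na v_h\cdot\na(v_h-p)$ and bound the first term below by $\la\norml{\na v_h}{K}^2$ via ellipticity. The second term is the delicate one: each $\psi_j^S$, hence $v_h$, is $\mathbf{a}^\ep$-harmonic on $S\supset K$, so integration by parts turns it into the boundary integral $\int_{\pa K}(\mathbf{a}^\ep\na v_h\cdot\bn)(v_h-p)\ds$, where I use $|v_h-p|\ls\ep|\xi|$ and the trace inequality of Lemma~\ref{trace1}. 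Controlling $\norml{\na v_h}{\pa K}$ this way requires $\norml{\na^2 v_h}{K}\ls\ep^{-1}\norml{\na v_h}{K}$, which again follows from \eqref{over} (the $\chi$-corrector contributes the dominant $\ep^{-1}$ factor, while the $\eta$-term is lower order by an interior second-derivative estimate combined with \eqref{eq2}). The outcome is that the boundary term is $\ls(\ep/h_K)^{1/2}\norml{\na v_h}{K}^2$, so for $\ga$ small it is absorbed into $\la\norml{\na v_h}{K}^2$, giving \eqref{stab3} with some $\alpha_2>0$.

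I expect the main obstacle to be precisely this boundary-term estimate in \eqref{stab3}. The bulk quantity $\int_K\mathbf{a}^\ep\na v_h\cdot\na(v_h-p)$ cannot be bounded crudely, because the $O(1)$ oscillating corrector $\xi_j(\na_y\chi^j)(x/\ep)$ offers no smallness; the harmonicity of $v_h$ must be invoked to reduce it to a boundary integral, and then the sharp trace-and-second-derivative bookkeeping — extracting the clean power $(\ep/h_K)^{1/2}$ and verifying the $\eta$-contribution is genuinely lower order — is the technically demanding step that forces the smallness hypothesis $\ep/h_K\le\ga$.
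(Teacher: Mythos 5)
The paper does not actually prove this lemma: it is quoted verbatim from Lemma~3.2 of \cite{SDW2015} (see also Lemma~4.2 of \cite{DW2014}), so there is no in-paper proof to compare against. Your reconstruction is correct and follows essentially the same route as the cited source: write $v_h-\Pi_Kv_h=\ep\,\xi_j(\chi^j+\eta^j)$ via \eqref{over}, get \eqref{stab2} from the boundedness of $\na_y\chi^j$ (upper bound) and the zero mean of $\na_y\chi^j$ over $Y$ plus \eqref{eq2} (lower bound), and get the coercivity in \eqref{stab3} by exploiting the $\mathbf{a}^\ep$-harmonicity of $v_h$ in $S\supset K$ to convert $\int_K\mathbf{a}^\ep\na v_h\cdot\na(v_h-\Pi_Kv_h)\dx$ into a boundary integral controlled by the trace inequality and an $H^2$ bound of the form $\abs{v_h}_{H^2(K)}\ls\ep^{-1}\norml{\na v_h}{K}$. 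One point you handle correctly and that is worth making explicit: you must derive that $H^2$ bound directly from \eqref{over} (as you do), and not by invoking Lemma~\ref{inverse1}, since that lemma is stated under the assumptions of the present one and citing it here would be circular.
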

The following lemma gives an inverse estimate for the function in
space $\text{OMS}(K)$ \cite[Lemma 5.2]{DW2014}.
\begin{lemma}\label{inverse1}
Under the assumptions of Lemma \ref{stab1}, and assuming $\ep\lesssim h_K\lesssim
d_K$, we have
\begin{equation}
    |v_{h}|_{H^{2}(K)}\lesssim \frac{1}{\ep}\norml{\na v_{h}}{K}  \quad\forall\,
v_{h}\in OMS(K).
\end{equation}
\end{lemma}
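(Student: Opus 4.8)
The statement to establish is Lemma~\ref{inverse1}, an inverse-type estimate, so the plan is to trade the two derivatives in $\abs{v_h}_{H^2(K)}$ for the single derivative in $\norml{\na v_h}{K}$ at the cost of one factor $\ep^{-1}$, by exploiting the explicit two-scale structure of $\text{OMS}(K)$. Since Lemma~\ref{inverse1} inherits the hypotheses of Lemma~\ref{stab1}, the equivalence \eqref{stab2} is at my disposal. I would write $v_h=c_i\bar\psi_i^K$ and set $p:=\Pi_K v_h=c_i\varphi_i^K\in P_1(K)$; as $p$ is affine, its gradient $g:=\na p$ is a constant vector and $\abs{v_h}_{H^2(K)}=\abs{v_h-p}_{H^2(K)}$. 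The expansion \eqref{over} then yields the pointwise identity
\begin{equation*}
 v_h-p=\ep\, g_j\,\chi^j(x/\ep)+\ep\, g_j\,\eta^j(x)\qquad\text{in }K,
\end{equation*}
so it suffices to bound the $H^2(K)$ seminorm of each corrector term by $\ep^{-1}\abs{g}\,h_K^{n/2}$, because \eqref{stab2} gives $\abs{g}\,h_K^{n/2}\eqsim\norml{\na p}{K}=\norml{\na\Pi_K v_h}{K}\eqsim\norml{\na v_h}{K}$.

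For the fast term I would differentiate twice: since $g$ is constant, the Hessian of $\ep g_j\chi^j(x/\ep)$ equals $\ep^{-1}g_j(\na_y^2\chi^j)(x/\ep)$. Because $a_{ij}\in W^{2,p}(Y)$ with $p>n$ we have $\chi^j\in H^2(Y)$, and the change of variables $y=x/\ep$ together with $Y$-periodicity gives $\int_K\abs{(\na_y^2\chi^j)(x/\ep)}^2\dx\ls h_K^n\norm{\na_y^2\chi^j}^2_{L^2(Y)}\ls h_K^n$, the periodic averaging being accurate up to boundary cells precisely because $h_K\gtrsim\ep$. Hence $\abs{\ep g_j\chi^j(\cdot/\ep)}_{H^2(K)}\ls\ep^{-1}\abs{g}h_K^{n/2}$, which is exactly the claimed order; this corrector is what forces the factor $\ep^{-1}$.

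The boundary-layer term is the main obstacle, since $\eta^j$ is not explicit. Here I would invoke interior elliptic regularity for the homogeneous equation \eqref{etadef}. Choosing an intermediate set $K\subset\subset\tilde K\subset\subset S$ with $\mathrm{dist}(K,\pa\tilde K)\eqsim h_K$ and $\mathrm{dist}(\tilde K,\pa S)\gtrsim d_K$, the standard difference-quotient/Caccioppoli argument for divergence-form operators with Lipschitz coefficients (differentiate \eqref{etadef}, then apply the interior $H^1$ estimate to $\pa_m\eta^j$) gives
\begin{equation*}
 \abs{\eta^j}_{H^2(K)}\ls\Big(\tfrac{1}{h_K}+\norm{\na a^\ep}_{L^\infty(\tilde K)}\Big)\norml{\na\eta^j}{\tilde K}.
\end{equation*}
Now $\norm{\na a^\ep}_{L^\infty}\ls\ep^{-1}$ (as $a\in W^{2,p}\hookrightarrow C^{1}$ and $\na_x a^\ep=\ep^{-1}(\na_y a)(x/\ep)$), while the interior gradient estimate \eqref{eq2} applied on $\tilde K$ together with $\abs{\tilde K}\eqsim h_K^n$ gives $\norml{\na\eta^j}{\tilde K}\ls d_K^{-1}h_K^{n/2}$. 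Since $\ep\ls h_K\ls d_K$, the prefactor is $\ls\ep^{-1}$ and $d_K^{-1}\ls\ep^{-1}$, so that $\abs{\ep g_j\eta^j}_{H^2(K)}=\ep\abs{g}\,\abs{\eta^j}_{H^2(K)}\ls\abs{g}d_K^{-1}h_K^{n/2}\ls\ep^{-1}\abs{g}h_K^{n/2}$. Adding the two corrector bounds and using $\abs{g}h_K^{n/2}\eqsim\norml{\na v_h}{K}$ yields $\abs{v_h}_{H^2(K)}\ls\ep^{-1}\norml{\na v_h}{K}$, as desired. The only delicate point is establishing the interior $H^2$ estimate above with the explicit, sharp dependence on $\ep$ and on the geometric scales $h_K,d_K$; the remaining steps are direct computations.
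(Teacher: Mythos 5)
This paper does not actually prove Lemma~\ref{inverse1}: it is quoted verbatim from \cite[Lemma 5.2]{DW2014}, so there is no in-paper proof to compare against. Your argument is correct and is the natural (and, in substance, the standard) route for this estimate: subtract the affine part $\Pi_K v_h$, use the exact expansion \eqref{over} to reduce the Hessian to the two corrector terms, bound $|\ep g_j\chi^j(\cdot/\ep)|_{H^2(K)}$ by $\ep^{-1}|g|h_K^{n/2}$ via periodic cell-counting (legitimate since $a\in W^{2,p}(Y)$ with $p>n$ gives $\chi^j\in H^2(Y)$ and $\ep\ls h_K$ controls the partial cells), and control $|\eta^j|_{H^2(K)}$ by an interior Caccioppoli estimate carrying the $\|\na \mathbf{a}^\ep\|_{L^\infty}\ls\ep^{-1}$ prefactor, combined with \eqref{eq2} and $\ep\ls h_K\ls d_K$. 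The one step you should write out in full is that interior $H^2$ estimate for the homogeneous equation \eqref{etadef} with its explicit dependence on $h_K$ and $\|\na\mathbf{a}^\ep\|_{L^\infty}$ (difference quotients plus a cutoff supported at distance $\eqsim h_K$ from $K$), together with the observation that the intermediate set $\tilde K$ can be kept at distance $\gtrsim d_K$ from $\pa S$ because $d_K\ge\delta_0h_K$; the remaining steps are direct computations.
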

\section{Main results}
{In this section we only carry out the convergence analysis of MsDPGM. For the case of MsDFEM, similar results can be obtained by the same argument and are arranged in the Appendix~\ref{AB} for convenience of the reader.} For MsDPGM, we first show the stability of the bilinear form guaranteeing the existence and uniqueness of the solution, and then prove the error estimate with $\beta=-1, \rho=\ep$. For other cases such as $\beta=0,1$, the analysis is similar and is omitted here.
\subsection{Existence and uniqueness of the solution of the MsDPGM}
 We start by establishing the stability of the bilinear form of the MsDPGM.
\begin{Theorem}\label{ccPG}We have
\begin{equation}\label{cont}
    \abs{a_h(u_{h},v_{h})}\leq C\norm{u_{h}}_{h,\Om}\norm{v_{h}}_{h,\Om}  \qquad\forall\, u_{h},v_{h} \in V_{h,dc}^{ms}.
\end{equation}
Further, let the assumptions of Lemma \ref{inverse1} be fulfilled and $\ga_0$ is large enough, then
\begin{equation}\label{coer}
   a_h(v_{h}, v_{h})\geq \kappa\norm{v_{h}}_{h,\Om}^2  \qquad\forall\, v_{h} \in V_{h,dc}^{ms},
\end{equation}
where $\kappa>0$ is a constant independent of $h, \ep, \ga_0$.
\end{Theorem}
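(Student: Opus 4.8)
The plan is to prove the two inequalities separately, drawing on the stability of the transfer operator (Lemma \ref{stab1}), the trace inequality (Lemma \ref{trace1}), and the inverse estimate (Lemma \ref{inverse1}), with the decisive ingredient being the choice $\rho=\ep$. For the continuity bound \eqref{cont} I would estimate the four terms of $a_h$ one at a time against the three pieces of $\norm{\cdot}_{h,\Om}$. The volume term is handled by Cauchy--Schwarz in the weighted inner product followed by \eqref{stab2}, which controls $\norml{\na\Pi_K v_h}{K}$ by $\norml{\na v_h}{K}$ and hence by the energy part of the norm. Each face term carrying $\av{\mathbf{a}^\ep\na\cdot\bn}$ and $\jm{\Pi_h\cdot}$ is split into a product by inserting the weights $(\rho/\ga_0)^{1/2}$ and $(\ga_0/\rho)^{1/2}$, so that after an edgewise Cauchy--Schwarz and a discrete Cauchy--Schwarz over $e\in\Ga_h\cup\pa\Om$ one factor is the flux part of $\norm{u_h}_{h,\Om}$ and the other the jump part of $\norm{v_h}_{h,\Om}$ (and symmetrically for the $\be$-term); the penalty $J_0$ is immediate. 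Collecting these yields \eqref{cont} with a constant depending only on $\La$ and the equivalence constants in \eqref{stab2}.

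For the coercivity \eqref{coer} I would fix $\be=-1$ and $\rho=\ep$, so the two face terms coalesce and
$$a_h(v_h,v_h)=\sum_{K\in\T_h}\int_K\mathbf{a}^\ep\na v_h\cdot\na\Pi_h v_h\dx-2\sum_{e}\int_e\av{\mathbf{a}^\ep\na v_h\cdot\bn}\jm{\Pi_h v_h}\ds+\sum_e\frac{\ga_0}{\rho}\int_e\jm{\Pi_h v_h}^2\ds.$$
The first sum is bounded below by $\al_2\sum_K\norml{\na v_h}{K}^2$ using the lower bound in \eqref{stab3}; this is exactly where the transfer operator is made to cooperate with the weighted form. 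The cross term is treated by Young's inequality with a parameter $\de>1$, producing a negative contribution $-\de\,\mathrm{B}-\de^{-1}\mathrm{C}$, where $\mathrm{B}$ and $\mathrm{C}$ denote the flux and jump parts of $\norm{v_h}_{h,\Om}^2$. Taking $\de=2$ leaves a favorable $+\tfrac12\mathrm{C}$ from the penalty term, so the only remaining obstruction is the negative multiple of $\mathrm{B}$.

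The crux is absorbing $\mathrm{B}$ into the volume energy. Here I would apply the trace inequality \eqref{trace} to $\na v_h$ on each $K$, controlling $\norml{\na v_h}{\pa K}$ by $h_K^{-1/2}\norml{\na v_h}{K}+\norml{\na v_h}{K}^{1/2}\abs{v_h}_{H^2(K)}^{1/2}$, and then invoke the inverse estimate $\abs{v_h}_{H^2(K)}\ls\ep^{-1}\norml{\na v_h}{K}$ from Lemma \ref{inverse1}. Since $\ep\ls h_K$, both contributions are of order $\ep^{-1/2}\norml{\na v_h}{K}$, whence $\norml{\av{\mathbf{a}^\ep\na v_h\cdot\bn}}{e}^2\ls\ep^{-1}(\norml{\na v_h}{K_1^e}^2+\norml{\na v_h}{K_2^e}^2)$. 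Summing over edges and inserting $\rho=\ep$ cancels the $\ep^{-1}$ exactly, giving $\mathrm{B}\le(C_0/\ga_0)\sum_K\norml{\na v_h}{K}^2$. Choosing $\ga_0$ above a fixed threshold (depending on $C_0,\al_2,\la,\La$) then makes $2\mathrm{B}$ absorbable into $\al_2\sum_K\norml{\na v_h}{K}^2$ with room to spare, so that $a_h(v_h,v_h)\gtrsim\sum_K\norml{\na v_h}{K}^2+\mathrm{C}\gtrsim\mathrm{A}+\mathrm{C}$; since the same bound on $\mathrm{B}$ shows $\mathrm{A}+\mathrm{B}+\mathrm{C}\ls\mathrm{A}+\mathrm{C}$, one recovers \eqref{coer} with $\kappa$ independent of $h$, $\ep$, and $\ga_0$.

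The main obstacle is precisely this flux estimate for $\mathrm{B}$: it is the only place where the scale $\ep$ enters quantitatively, and it closes solely because the inverse inequality for $\text{OMS}(K)$ functions carries exactly one power of $\ep^{-1}$ that the deliberate weighting $\rho=\ep$ compensates. Arranging the bookkeeping of $\de$ and $\ga_0$ so that $\kappa$ emerges independent of $\ga_0$ is a secondary but necessary care.
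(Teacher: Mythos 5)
Your proposal is correct and follows essentially the same route as the paper's proof: continuity via Cauchy--Schwarz and the stability of $\Pi_K$ from Lemma \ref{stab1}, and coercivity via the lower bound \eqref{stab3}, Young's inequality on the cross term, and the absorption of the flux term into the volume energy through the trace inequality \eqref{trace} combined with the inverse estimate of Lemma \ref{inverse1} and $\ep\ls h$, closed by taking $\ga_0$ large. Your bookkeeping for recovering the flux part of $\norm{\cdot}_{h,\Om}$ (via $\mathrm{A}+\mathrm{B}+\mathrm{C}\ls\mathrm{A}+\mathrm{C}$) is a cosmetic variant of the paper's add-and-subtract step and changes nothing substantive.
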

\begin{proof}
{\color{black} From the definition of the norms, the Cauchy-Schwarz inequality and  Lemma~\ref{stab1}, it follows \eqref{cont} immediately.}

Next we prove \eqref{coer}. From \eqref{stab3}, we get
 \begin{equation*}
    \begin{split}
a_h(v_{h}, v_{h})&\geq C\sum_{K\in {\cal T}_{h}} \norml{(\mathbf{a}^\ep)^{1/2}\na v_{h}}{K}^2
- 2\sum_{e\in\Ga_h\cup \pa \Om}\int_{e} \av{\mathbf{a}^\ep\na v_{h}\cdot\bn}\jm{\Pi_h v_{h}}\ds\\
&\qquad+\sum_{e\in\Ga_h\cup \pa \Om}\frac{\ga_0}{\ep}\norml{\jm{\Pi_h v_{h}}}{e}^2.
     \end{split}
\end{equation*}
It is easy to see that,
\begin{align*}
   &2\sum_{e\in\Ga_h\cup \pa \Om}\int_{e} \av{\mathbf{a}^\ep\na v_{h}\cdot\bn}\jm{\Pi_h
   v_{h}}\ds
   \leq 2\sum_{e\in\Ga_h\cup \pa \Om}\norml{\av{\mathbf{a}^\ep\na v_{h}\cdot
\bn}}{e}\norml{\jm{\Pi_h v_{h}}}{e}\\
&\qquad\leq \sum_{e\in\Ga_h\cup \pa \Om}\frac{\ga_0}{2 \ep}\norml{\jm{\Pi_h v_{h}}}{e}^2
+\sum_{e\in\Ga_h\cup \pa \Om}\frac{2 \ep}{\ga_0}\norml{\av{\mathbf{a}^\ep\na v_{h}\cdot
\bn}}{e}^2.
     \end{align*}
Hence, we obtain
{ \begin{equation}\label{eqconce1}
    \begin{split}
a_h(v_{h}, v_{h})&\geq C\sum_{K\in {\cal T}_{h}} \norml{(\mathbf{a}^\ep)^{1/2}\na v_{h}}{K}^2-\frac{1}{2}\sum_{e\in\Ga_h\cup \pa \Om}\frac{\ga_0}{\ep}\norml{\jm{\Pi_h v_{h}}}{e}^2\\
&\quad -2\sum_{e\in\Ga_h\cup \pa \Om}\frac{\ep}{\ga_0}\norml{\av{\mathbf{a}^\ep\na v_{h}\cdot
\bn}}{e}^2+\sum_{e\in\Ga_h\cup \pa \Om}\frac{\ga_0}{\ep}\norml{\jm{\Pi_h v_{h}}}{e}^2\\
&= C\sum_{K\in {\cal T}_{h}} \norml{(\mathbf{a}^\ep)^{1/2}\na v_{h}}{K}^2+\frac{1}{2}\sum_{e\in\Ga_h\cup \pa \Om}\frac{\ga_0}{\ep}\norml{\jm{\Pi_h v_{h}}}{e}^2\\
&\quad +\frac{1}{2}\sum_{e\in\Ga_h\cup \pa \Om}\frac{\ep}{\ga_0}\norml{\av{\mathbf{a}^\ep\na v_{h}\cdot
\bn}}{e}^2- \frac{5}{2}\sum_{e\in\Ga_h\cup \pa \Om}\frac{\ep}{\ga_0}\norml{\av{\mathbf{a}^\ep\na v_{h}\cdot
\bn}}{e}^2.
     \end{split}
\end{equation}
}
By use of Lemmas \ref{trace1}, \ref{inverse1} and $\ep\ls h$, we have
\begin{equation}\label{eqconce2}
    \frac{\ep}{\ga_{0}}\norml{\{\mathbf{a}^\ep\na
v_{h}\cdot \bn\}}{e}^{2} \leq
\frac{C_{1}}{\ga_{0}}\norml{(\mathbf{a}^\ep)^{1/2}\na
v_{h}}{K}^{2}.
\end{equation}
Therefore, from \eqref{eqconce1} and \eqref{eqconce2}, we have
\begin{equation*}
    \begin{split}
a_h(v_{h}, v_{h})&\geq \left(C-\frac{5C_1}{2\ga_0}\right)\sum_{K\in {\cal T}_{h}} \norml{(\mathbf{a}^\ep)^{1/2}\na v_{h}}{K}^2\\
&\quad +\frac{1}{2}\sum_{e\in\Ga_h\cup \pa \Om}\frac{\ep}{\ga_0}\norml{\av{\mathbf{a}^\ep\na v_{h}\cdot
\bn}}{e}^2\\
&\quad +\frac{1}{2}\sum_{e\in\Ga_h\cup \pa \Om}\frac{\ga_0}{\ep}\norml{\jm{\Pi_h v_{h}}}{e}^2,
\end{split}
\end{equation*}
where $\ga_0$ is large enough such that
$\frac{5C_1}{2\ga_0}<\frac{C}{2}$. Then by choosing $\kappa=\text {min}(
\frac{C}2,\frac12)$, it follows \eqref{coer}. This completes the
proof.
\end{proof}

Theorem \ref{ccPG} guarantees that there exists a unique solution to our MsDPGM. Now we establish an analogue of C\'ea lemma written in the following theorem:
\begin{Theorem}\label{lcea} For large enough $\ga_0$, the following inequality holds:
\begin{equation}
    E(u_\ep,u_{h})\lesssim \inf_{v_{h}\in V_{h,dc}^{ms}}E(u_\ep,v_{h}),
\end{equation}
where the error function $E$ is defined in \eqref{err}.
\end{Theorem}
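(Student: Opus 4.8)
The plan is to follow the standard Strang/Céa strategy adapted to the Petrov--Galerkin setting with the transfer operator $\Pi_h$. First I would fix an arbitrary $v_h\in V_{h,dc}^{ms}$ and set $w_h:=u_h-v_h\in V_{h,dc}^{ms}$. By the triangle inequality \eqref{errtri}, it suffices to control $\norm{w_h}_{h,\Om}$ by $E(u_\ep,v_h)$, since then $E(u_\ep,u_h)\ls E(u_\ep,v_h)+\norm{w_h}_{h,\Om}$ and taking the infimum over $v_h$ finishes the argument. By the coercivity \eqref{coer} of Theorem~\ref{ccPG}, for $\ga_0$ large enough we have $\kappa\norm{w_h}_{h,\Om}^2\le a_h(w_h,w_h)$, so the whole task reduces to bounding $a_h(w_h,w_h)$ from above by $E(u_\ep,v_h)\,\norm{w_h}_{h,\Om}$.

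The key step is Galerkin-type orthogonality. Since $u_h$ solves the MsDPGM \eqref{eipgfem}, we have $a_h(u_h,w_h)=(f,\Pi_h w_h)$, hence
\begin{equation*}
a_h(w_h,w_h)=a_h(u_h,w_h)-a_h(v_h,w_h)=(f,\Pi_h w_h)-a_h(v_h,w_h).
\end{equation*}
Next I would use the exact solution $u_\ep$ as a bridge: from the DG variational formulation \eqref{weakformula}, using $\Pi_h w_h\in V_{h,dc}$ as test function, one gets $a(u_\ep,\Pi_h w_h)=(f,\Pi_h w_h)$. Writing out $a(u_\ep,\Pi_h w_h)$ from the definition of the DG bilinear form and comparing it termwise with $a_h(v_h,w_h)$ (recalling that in $a_h$ the trial argument carries $\na v_h$ while the test argument appears through $\na\Pi_h w_h$, $\jm{\Pi_h w_h}$, and that $J_0$ couples $\jm{\Pi_h v_h}$ with $\jm{\Pi_h w_h}$), I would obtain
\begin{equation*}
a_h(w_h,w_h)=a(u_\ep,\Pi_h w_h)-a_h(v_h,w_h).
\end{equation*}
Here the crucial cancellation is that the volume term, the flux term, and the penalty term of $a(u_\ep,\Pi_h w_h)$ all test against $\Pi_h w_h$ exactly as the corresponding terms of $a_h(v_h,w_h)$ do, so that the difference collapses into an expression involving only $u_\ep-v_h$ paired against $w_h$. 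The adjoint-consistency term ($\be$-term) must be handled with care, since it pairs $\jm{\Pi_h v_h}$ (respectively $\jm{u_\ep}=0$) with $\av{\mathbf{a}^\ep\na w_h\cdot\bn}$; this is where the structure of the norms $\norm{\cdot}_{h,\Om}$ and $E(\cdot,\cdot)$ is used.

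After this regrouping, each resulting term is a boundary or volume pairing of a factor built from $u_\ep-v_h$ against a factor built from $w_h$. I would apply the Cauchy--Schwarz inequality edge by edge and element by element, matching $\|(\mathbf{a}^\ep)^{1/2}\na(u_\ep-v_h)\|_{L^2(K)}$, $\frac{\rho}{\ga_0}\norml{\av{\mathbf{a}^\ep\na(u_\ep-v_h)\cdot\bn}}{e}^2$, and $\frac{\ga_0}{\rho}\norml{\jm{u_\ep-\Pi_h v_h}}{e}^2$ against the dual quantities in $\norm{w_h}_{h,\Om}$, so that the product is bounded by $E(u_\ep,v_h)\,\norm{w_h}_{h,\Om}$. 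Combining with coercivity gives $\kappa\norm{w_h}_{h,\Om}^2\ls E(u_\ep,v_h)\,\norm{w_h}_{h,\Om}$, hence $\norm{w_h}_{h,\Om}\ls E(u_\ep,v_h)$, and \eqref{errtri} completes the proof. The main obstacle I anticipate is the bookkeeping in the orthogonality step: the asymmetric placement of $\Pi_h$ in $a_h$ means the terms of $a(u_\ep,\Pi_h w_h)$ and $a_h(v_h,w_h)$ do not line up trivially, and one must verify that the $\be$-term and the penalty term regroup cleanly so that every residual factor can be absorbed into either $E(u_\ep,v_h)$ or $\norm{w_h}_{h,\Om}$ with the correct $\rho/\ga_0$ weights.
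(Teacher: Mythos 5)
Your proposal is correct and follows essentially the same route as the paper's own proof: coercivity of $a_h$ plus Galerkin orthogonality, using the consistency identity \eqref{weakformula} tested with $\Pi_h(u_{h}-v_{h})$ together with $\jm{u_\ep}=0$ and $\jm{\mathbf{a}^\ep\na u_\ep\cdot\bn}=0$ to collapse the difference into pairings of $u_\ep-v_{h}$ against $u_{h}-v_{h}$, then termwise Cauchy--Schwarz with the matching $\rho/\ga_0$ weights and the triangle inequality \eqref{errtri}. The bookkeeping you flag for the $\be$-term and penalty term works out exactly as you anticipate.
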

\begin{proof}
It is clear that by Theorem \ref{ccPG} we have
\begin{equation*}
    \begin{split}
       \norm{u_{h}-v_{h}}^2_{h,\Om}\ls& a_h(u_{h}-v_{h}, u_{h}-v_{h})\\
      =&a_h(u_{h}, u_{h}-v_{h})-a_h(v_{h}, u_{h}-v_{h})\\
      =&(f,\Pi_h (u_{h}- v_{h}))-a_h(v_{h}, u_{h}-v_{h}).
    \end{split}
\end{equation*}
From \eqref{weakformula}, it follows that
\begin{equation*}
    \begin{split}
      (f,\Pi_h (u_{h}-v_{h}))=& \sum_{K\in {\cal T}_{h}}\int_{K}\mathbf{a}^\ep\na u_{\ep}\cdot \na \Pi_h(u_{h}-v_{h})\dx\\
      &\quad-\sum_{e\in\Ga_h\cup \pa \Om}\int_{e} \av{\mathbf{a}^\ep\na u_{\ep}\cdot\bn}\jm{\Pi_h (u_{h}-v_{h})}\ds.
     \end{split}
\end{equation*}
Then, using the facts that $\jm{u_\ep}=0$ and $\jm{\mathbf{a}^\ep\na u_{\ep}\cdot\bn}=0 $, we have
\begin{equation*}
    \begin{split}
      (f,\Pi_h (u_{h}- v_{h}))&-a_h(v_{h}, u_{h}-v_{h}) \\
      &=\sum_{K\in {\cal T}_{h}}\int_{K}\mathbf{a}^\ep\na (u_{\ep}-v_{h})\cdot \na \Pi_h (u_{h}-v_{h})\dx\\
      &\quad-\sum_{e\in\Ga_h\cup \pa \Om}\int_{e} \av{\mathbf{a}^\ep\na (u_{\ep}-v_{h})\cdot\bn}\jm{\Pi_h (u_{h}-v_{h})}\ds\\
      &\quad+\sum_{e\in\Ga_h\cup \pa \Om}\int_{e} \av{\mathbf{a}^\ep\na (u_{h}-v_{h})\cdot\bn}\jm{\Pi_h v_{h}-u_\ep}\ds\\
      &\quad-\sum_{e\in\Ga_h\cup \pa \Om}\frac{\ga_0}{\ep}\int_{e}\jm{\Pi_h v_{h}-u_\ep}\jm{\Pi_h (u_{h}-v_{h})}\ds\\
      &\ls E(u_{\ep},v_{h})\norm{u_{h}-v_{h}}_{h,\Om}.
     \end{split}
\end{equation*}
Therefore, we obtain
\begin{equation*}
      \norm{u_{h}-v_{h}}_{h,\Om} \ls E(u_{\ep},v_{h}),
\end{equation*}
which together with \eqref{errtri} yield
\begin{align*}
E(u_{\ep},u_{h})\ls E(u_{\ep},v_{h})+\norm{u_{h}-v_{h}}_{h,\Om}\ls E(u_{\ep},v_{h}).
\end{align*}
The proof is completed.
\end{proof}

\subsection{A priori error estimate of MsDPGM}
We present the main result of the paper which gives the error
estimate of the MsDPGM.
{\begin{Theorem}\label{energeerror}Let $u_\ep$ be the solution of \eqref{eproblem}, and let $u_h$ be the numerical solution computed using MsDPGM defined by \eqref{eipgfem}. Assume that $u_0 \in H^3(\Om), f \in L^2(\Om)$, and that $\ep \ls h \ls d$,
and that the penalty parameter $\ga_0$ is large enough. Then there
exits a constant $\ga$ independent of $h$ and $\ep$ such that if
$\ep/h_K \leq \ga$ for all $K \in \T_h$, the following error
estimate holds:
\begin{equation}\label{energyest}
  E(u_\ep,u_{h})\lesssim \sqrt{\ep}+\frac{\ep}{d}+h+\frac{h^{3/2}}{\sqrt{\ep}},
\end{equation}
where $d=\text {min}_{K\in \T_h}d_K$.
\end{Theorem}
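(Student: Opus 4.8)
The plan is to combine the quasi-optimality from Theorem~\ref{lcea} with the approximation properties of the multiscale interpolant. By Theorem~\ref{lcea} it is enough to produce one competitor $v_h\in V_{h,dc}^{ms}$ and estimate $E(u_\ep,v_h)$; the obvious candidate is the interpolant $\phi_h$ of Lemma~\ref{lmultiapp}, given elementwise by $\phi_h^K=\sum_i u_0(x_i^K)\bar\psi_i^K$. I would then estimate the three contributions of $E(u_\ep,\phi_h)$ in~\eqref{err} (with $\rho=\ep$) one by one: the broken energy term, the weighted flux-average term on edges, and the weighted jump term on edges.

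First I would kill the jump term. Since $\Pi_K\phi_h^K=\sum_i u_0(x_i^K)\varphi_i^K$, the function $\Pi_h\phi_h$ is exactly the \emph{continuous} piecewise-linear nodal interpolant of $u_0$; hence $\jm{\Pi_h\phi_h}=0$ across every interior edge, and because $u_0=0$ on $\pa\Om$ it also vanishes on the boundary edges. Together with $\jm{u_\ep}=0$ (as $u_\ep\in H^1(\Om)$) this makes the whole term $\sum_e\frac{\ga_0}{\ep}\norml{\jm{u_\ep-\Pi_h\phi_h}}{e}^2$ vanish identically; this cancellation, built into the projection-based penalty, is what prevents a resonance contribution.

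For the broken energy term I would write $u_\ep-\phi_h=(u_\ep-u_1)+(u_1-\phi_h)$ and bound it by $\abs{u_\ep-\phi_h}_{H^1(\T_h)}$ using the uniform bounds on $\mathbf a^\ep$. The classical first-order homogenization estimate $\abs{u_\ep-u_1}_{H^1(\Om)}\ls\sqrt\ep$ controls the first piece, while summing \eqref{lmultiappl1} over $K$ (with $n=2$ and $d_K\ge\delta_0 h_K$) gives $h+\ep/d$ for the second. This already yields the $\sqrt\ep+h+\ep/d$ part of \eqref{energyest}.

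The flux-average term is the crux and the main obstacle. Using the trace inequality of Lemma~\ref{trace1} I would convert each edge norm $\norml{\av{\mathbf a^\ep\na(u_\ep-\phi_h)\cdot\bn}}{e}^2$ into elementwise $H^1$ and $H^2$ seminorms, and again split through $u_1$. The delicate part is the highly oscillatory homogenization error $u_\ep-u_1$, whose $H^2$ seminorm only satisfies $\abs{u_\ep-u_1}_{H^2(\Om)}\ls\ep^{-1/2}$ by Theorem~\ref{h2homoerror}; here I would use the \emph{multiplicative} form of Lemma~\ref{trace1}, so that (up to the penalty weight $\ep/\ga_0$) this contribution is $\ls\ep\,\abs{u_\ep-u_1}_{H^1}\,\abs{u_\ep-u_1}_{H^2}\ls\ep\cdot\sqrt\ep\cdot\ep^{-1/2}=\ep$, i.e.\ only $O(\sqrt\ep)$ in $E$. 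The point is that the $\ep$-weight in the flux norm must be paired with the sharp trace and with Theorem~\ref{h2homoerror}, so as not to lose a factor $h^{-1/2}$ that would create a non-vanishing $\sqrt h$ term. For the approximation error $u_1-\phi_h$ I would feed the elementwise bounds \eqref{lmultiappl1} and \eqref{lmultiappl3} into the trace estimate: the $\ep^{-1}h_K$ factor in \eqref{lmultiappl3}, weighted by $\ep$ and summed, produces exactly the $h^{3/2}/\sqrt\ep$ term, the $H^1$ part \eqref{lmultiappl1} produces $h$, and the boundary-layer ($\eta^j$) contributions produce $\ep/d$. Finally I would collect the three estimates and absorb the remaining lower-order products by means of $\ep\ls h\ls d$ and Young's inequality to arrive at \eqref{energyest}.
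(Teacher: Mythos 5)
Your overall strategy coincides with the paper's: reduce to the interpolation error via Theorem~\ref{lcea}, take the competitor $\psi_h$ with $\psi_h|_K=\phi_h^K$, and estimate the three pieces of $E(u_\ep,\psi_h)$ by means of Lemma~\ref{trace1}, Lemma~\ref{lmultiapp} and Theorem~\ref{h2homoerror}; your treatment of the broken energy term and of the homogenization part $u_\ep-u_1$ of the flux term is exactly the paper's. Two points differ. First, your observation that the jump term vanishes identically is correct and in fact sharper than the paper: since $\Pi_h\psi_h=I_hu_0$ is the continuous nodal interpolant (its traces from neighbouring elements agree on shared edges, and it vanishes on boundary edges because $u_0=0$ on $\pa\Om$), one has $\jm{u_\ep-\Pi_h\psi_h}=0$ on every edge. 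The paper instead bounds this jump by the one-sided traces of $u_0-I_hu_0$ and obtains $\mathrm{II}\ls h^3\ep^{-1}\abs{u_0}_{H^2(\Om)}^2$, and this is precisely the source of the term $h^{3/2}/\sqrt{\ep}$ in \eqref{energyest}.

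Second --- and this is the one genuine error in your write-up --- you claim that $h^{3/2}/\sqrt{\ep}$ is instead produced by the flux-average term through the factor $\ep^{-1}h_K$ in \eqref{lmultiappl3}. It is not: the multiplicative trace inequality gives for that contribution
\begin{equation*}
\ep\,\abs{u_1-\phi_h^K}_{H^1(K)}\,\abs{u_1-\phi_h^K}_{H^2(K)}\ls \ep\cdot h\cdot(\ep^{-1}h)\,\abs{u_0}_{H^2}^2+\cdots= h^2\abs{u_0}_{H^2}^2+\cdots,
\end{equation*}
so the flux term contributes only $h+\sqrt{\ep}+\ep^2$ to $E$, exactly as in the paper's estimate \eqref{I}, and never $h^{3/2}/\sqrt{\ep}$. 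This mis-attribution does not invalidate your proof: every term you actually control is dominated by the right-hand side of \eqref{energyest}, since $h\le h^{3/2}/\sqrt{\ep}$ under the hypothesis $\ep\ls h$. Carried out consistently, your route in fact yields the slightly stronger bound $E(u_\ep,u_h)\ls\sqrt{\ep}+\ep/d+h$, with no $h^{3/2}/\sqrt{\ep}$ term at all.
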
}

\begin{proof}
According to Theorem~\ref{lcea}, the proof is devoted to estimating
the interpolation error. To do this, we define $\psi_h$ by
   \begin{equation}\label{intplMsFEM}
        \psi_h|_K=\phi_h^K=\sum_{x_i^K\, \text{node of}\ K}u_0(x_i^K)\bar\psi_i^K(x)\quad  \forall K\in {\cal T}_h.
   \end{equation}
Clearly, $\psi_h\in V_{h,dc}^{ms}$. It is easy to see that
   \begin{equation*}
      \Pi_K\phi_h^K=I_hu_0|_{K},
   \end{equation*}
where $I_h:H^{s}(\T_h)\rightarrow V_{h,dc}$ is the Lagrange interpolation operator. Then we set $v_{h}$ as $\psi_h$. It is shown that in \cite{DW2014},
\begin{equation}\label{estr0}
\begin{split}
&\bigg(\sum_{K\in\T_{h}}\|(\mathbf{a}^\ep)^{1/2}\na (u_\ep-v_{h})\|_{L^2(K)}^2\bigg)^{1/2}\\
 &\qquad\qquad\ls  h\abs{u_0}_{H^2(\Om)}+\sqrt{\ep}|u_0|_{W^{1,\infty}(\Om)}+\frac{\ep}{d}|u_0|_{W^{1,\infty}(\Om)}.
 \end{split}
\end{equation}

Next, we estimate the term
\begin{equation*}
    \sum_{e\in\Ga_h\cup \pa \Om}\frac{\ep}{\ga_0}\norml{\av{\mathbf{a}^\ep\na (u_\ep-v_{h})\cdot\bn}}{e}^2:=\mathrm{I}.
\end{equation*}
From \eqref{trace}, we have
\begin{equation*}
\begin{split}
  \mathrm{I} &\ls \ep h^{-1}\norml{\na(u_\ep-u_{1})}{\Om}^2+\ep h^{-1}\sum_{K \in \T_h}\norml{\na(u_1-\psi_h)}{K}^2\\
   &\quad +\ep \norml{\na(u_{\ep}-u_1)}{\Om}^2\norml{\na^2(u_{\ep}-u_1)}{\Om}^2\\
   &\quad +\ep \Big(\sum_{K \in \T_h}\norml{\na(u_1-\psi_h)}{K}^2\Big)^{1/2}\Big(\sum_{K \in \T_h}\norml{\na^2(u_1-\psi_h)}{K}^2\Big)^{1/2}.
\end{split}
\end{equation*}
Therefore, it follows from Theorem \ref{h2homoerror}, Lemma \ref{lmultiapp} and the assumption $\ep \ls h \ls d$ that,
\begin{equation}\label{I}
    \mathrm{I}\ls h^2 |u_0|_{H^2(\Om)}^2+\ep |u_0|_{W^{1,\infty}(\Om)}^2+\ep^4 |u_0|_{H^3(\Om)}^2,
\end{equation}
where we have used $\frac{\ep}{\sqrt{h}} < \sqrt{\ep}$ and the Young's inequality to derive the above inequality.

It remains to consider the term $\sum_{e\in\Ga_h\cup \pa \Om} \frac{\ga_0}{\ep}\norml{\jm{u_\ep-\Pi_h v_{h}}}{e}^2.$ Noting that both $u_\ep$ and $u_0$ are continuous functions, we have
\begin{equation*}
\begin{split}
  \sum_{e\in\Ga_h\cup \pa \Om} \frac{\ga_0}{\ep}\norml{\jm{u_\ep-\Pi_h v_{h}}}{e}^2&=\sum_{e\in\Ga_h\cup \pa \Om} \frac{\ga_0}{\ep}\int_e\jm{u_0-\Pi_h v_{h}}^2\ds\\
  &\ls \sum_{e\in\Ga_h\cup \pa \Om}\frac{\ga_0}{\ep}\int_e(u_0-\Pi_h \psi_h)^2\ds:=\mathrm{II}.
\end{split}
\end{equation*}
Then, by use of
Lemma~\ref{trace1}, we have
\begin{equation*}
    \begin{split}
       \int_e(u_0-&\Pi_h \psi_h)^2\ds=\int_e(u_0-I_hu_0)^2\ds\\
       &\ls h^{-1}\norml{u_0-I_hu_0}{K}^2+\norml{u_0-I_hu_0}{K}\norml{\na(u_0-I_hu_0)}{K}\\
       &\ls h^3|u_0|_{H^{2}(K)}^2,
     \end{split}
\end{equation*}
which yields
\begin{equation}\label{estr1}
\mathrm{II}\ls\frac{h^3}{\eps}|u_0|_{H^2(\Om)}^2.
\end{equation}
Hence, from \eqref{estr0}--\eqref{estr1}, it follows \eqref{energyest} immediately.
\end{proof}

\section{Numerical experiments}\label{numericaltest}
{In this section, we present numerical experiments to confirm the theoretical results in Section 5. We show the numerical results of MsDPGM defined in (\ref{eipgfem}), and also results of MsDFEM defined in (\ref{eifem}) which show good performance as well as MsDPGM.
In order to illustrate the accuracy of our methods, we also implement the standard MsFEM in Petrov--Galerkin formulation which is denoted as MsPGM, and the MsPGM which uses the classical oversampling multiscale basis (OMsPGM).  We also show the results of the traditional linear finite element method (FEM)  and discontinuous finite element method (DFEM) on the corresponding coarse grid to get a feeling for the accuracy of the multiscale methods. All numerical experiments are designed to show better performance of MsDPGM than the other MsPGMs.}

{In all tests, for simplicity, we use the standard triangulation which is constructed by first dividing the domain $\Om$ into sub-squares of equal length $h$ and then connecting the lower-left and the upper-right vertices of each sub-square. For any coarse-grid element $K\in\T_{h}$ we put its macro-element
$S(K)$ in such a way that their barycenters are coincide and
their corresponding edges are parallel. The length of the horizontal and vertical edges of $S(K)$ is four times of the corresponding length of the edges of $K$.
We assume that all right-angle sides of $S(K), K\in\T_{h}$ have the same length denoted by $h_S$.
Recall the definition of the $d=\text{min}_{K\in \T_h}d_{K}$, define
 \begin{align}\label{etd}
 \tilde d=(h_S-h)/3.
 \end{align}
 It is clear that $d\eqsim\tilde d$. See Figure~\ref{figsample1} for an illustration.

\begin{figure}[htp]
  \centering
\begin{picture}(200,100)(0,0)
  \put(8,0){\line(1,0){96}}
  \put(8,0){\line(1,1){96}}
  \put(104,0){\line(0,1){96}}

  \put(56,24){\line(1,0){24}}
  \put(56,24){\line(1,1){24}}
  \put(80,24){\line(0,1){24}}

    \put(118,96){\line(1,0){96}}
  \put(118,0){\line(1,1){96}}
  \put(118,0){\line(0,1){96}}

  \put(142,72){\line(1,0){24}}
  \put(142,48){\line(1,1){24}}
  \put(142,48){\line(0,1){24}}
    \put(68,30){$K$}
    \put(78,60){$S(K)$}
    \put(146,62){$K$}
    \put(120,80){$S(K)$}

    \put(72,0){\line(0,1){6}}
    \put(72,18){\line(0,1){6}}
    \put(68,7){$\tilde d$}

    \put(8,0){\line(0,-1){6}}
    \put(104,0){\line(0,-1){6}}
    \put(8,-3){\line(1,0){36}}
    \put(104,-3){\line(-1,0){36}}
    \put(49,-8){$h_S$}

    \put(80,24){\line(1,0){6}}
    \put(80,48){\line(1,0){6}}
    \put(83,24){\line(0,1){6}}
    \put(83,48){\line(0,-1){6}}
    \put(81,32){$h_K$}

\end{picture}
\caption{The element $K$ and its oversampling element $S(K)$: lower-right elements (left) and upper-left elements (right). }
\label{figsample1}
\end{figure}
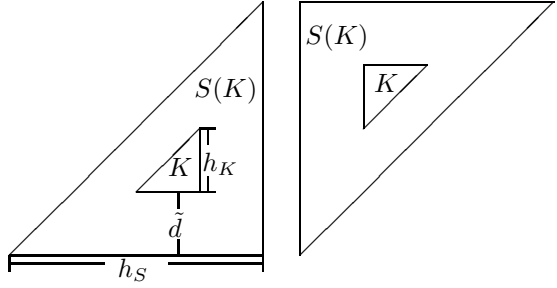

In all of these computations, we have used finely resolved numerical solutions obtained using the traditional linear finite element method with mesh size $h_f=1/4096$ as the ``exact'' solutions which are denoted as $u_e$.}
Denoting $u_h$ as the numerical solutions computed by the methods considered in this section,
we measure the relative error in $L^2$, $L^\infty$ and energy norms as following:
\[
\frac{\norm{u_h-u_e}_{L^2}}{\norm{u_e}_{L^2}},\,\frac{\norm{u_h-u_e}_{L^\infty}}{\norm{u_e}_{L^\infty}},\,\frac{\norm{u_h-u_e}_{1,h}}{\norm{u_e}_{1,h}},
\]
where
\[
||v||_{1,h} :=\left(\sum_{K\in\T_h}\|(\mathbf{a}^\ep)^{1/2}\na v\|_{L^2(K)}^2 +\norm{v}_{L^2(\Om)}^2\right)^{1/2}.
\]
In all tests, the coefficient $\mathbf{a}^\ep$ is chosen as the form $\mathbf{a}^\ep=a^\ep I$ where $a^\ep$ is a scalar function and $I$ is the 2 by 2 identity matrix.
\subsection{Application to elliptic problems with highly oscillating coefficients}
We first consider the model problem (\ref{eproblem}) in the squared domain
$\Om=[0,1]\times[0,1]$. Assume that $f=1$ and the coefficient
$\mathbf{a}^\ep(x_1,x_2)$ has
 the following periodic form:
\begin{equation}\label{coef1}
\mathbf{a}^\ep(x_1,x_2)=\frac{2+1.8\sin(2\pi x_1/\epsilon)}{2+1.8\cos(2\pi x_2/\epsilon)}
+\frac{2+1.8\sin(2\pi x_2/\epsilon)}{2+1.8\sin(2\pi x_1/\epsilon)},
\end{equation}
where we fix $\ep = 1/100$.

In this test, we choose $h=1/32$ and report errors in the $L^2 ,
L^\infty $ and energy norms in Table~\ref{Table:1}. { We can see that
the MsDPGM and MsDFEM give more accurate results than the other
multiscale methods considered here,} while the FEM and DFEM give
worse approximations to the gradient of solution. { We also compare
the CPU time $T_1$ and $T_2$ spent by the MsDFEM and MsDPGM to show the good performance of MsDPGM in computational complexity, where $T_1$ is
the CPU time of assembling the stiffness matrix, and $T_2$ is the
CPU time of solving the discrete system of algebraic equations. We can observe that the CPU time $T_1$ of our MsDPGM for assembling the
stiffness matrix is shorter than that of MsDFEM since the
Petrov--Galerkin method can decrease the computational complexity.}

\begin{table}[htp]
\caption{Compare different methods to show the accuracy of MsDPGM in
periodic case given by \eqref{coef1}. $\rho=\ep=1/100, \tilde
d=h=1/32,\ga_0=20$. }\label{Table:1}
\begin{center}
\begin{tabular}{|c|c|c|c|c|c|} \hline
 \multirow{2}{*}{Relative error}  &  \multirow{2}{*}{$L^2$} & \multirow{2}{*}{$L^\infty$} &\multirow{2}{*}{Energy norm}& \multicolumn{2}{|c|}{CPU time(s)} \\\cline{5-6}
 & & & & $T_1$ & $T_2$ \\\hline
   FEM           &  0.1150e-00 & 0.2311e-00 & 0.8790e-00  & -- & --\\\hline
   DFEM           &  0.2667e-00 & 0.2634e-00 & 0.5498e-00 & -- & --\\\hline
   MsPGM       &  0.7448e-01 & 0.7342e-01 & 0.2929e-00 &--& --\\\hline
   OMsPGM           &  0.1430e-01 & 0.1521e-01 & 0.1641e-00 & -- &--\\\hline
   MsDFEM        &  0.1007e-01 & 0.1029e-01 & 0.1629e-00 & 1.300& 0.028\\\hline
   MsDPGM        &  0.1266e-01 & 0.1395e-01 & 0.1631e-00 &1.119& 0.027
\\\hline
\end{tabular}
\end{center}
\end{table}

{ Secondly, we do an experiment to study how the penalty parameter $\gamma_0$ affects the errors. We fix $\rho=\ep=1/100, \tilde d=h=1/32$ and choose a series of $\gamma_0$ in the test. The result is shown in Table~\ref{Table:11}. We observe that as $\gamma_0$ goes larger, the relative error is close to the error of the OMsPGM. It seems that MsDPGM converges to OMsPGM as the penalty parameter $\gamma_0$ goes to infinity (cf. \cite{LN2000}).
\begin{table}[htp]
\caption{Convergence with respect to $\gamma_0$. $\rho=\ep=1/100, \tilde d=h=1/32$. }\label{Table:11}
\begin{center}
\begin{tabular}{|c|c|c|c|} \hline
 {Relative Error}  &  $L^2$ & $L^\infty$ &Energy norm \\\hline
   $\gamma_0=10$   &  0.1100e-01 & 0.1266e-01 & 0.1637e-00\\\hline
   $\gamma_0=20$    &  0.1266e-01 & 0.1395e-01 & 0.1631e-00\\\hline
   $\gamma_0=100$    &  0.1397e-01 & 0.1496e-01 & 0.1638e-00\\\hline
   $\gamma_0=1000$    &  0.1426e-01 & 0.1519e-01 & 0.1641e-00\\\hline
   $\gamma_0=10000$    &  0.1429e-01 & 0.1521e-01 & 0.1641e-00\\\hline
   \end{tabular}
\end{center}
\end{table}
}

The third numerical experiment is to show the mesh size $h$
plays a role as that describing in Theorem~\ref{energeerror}. We
fix $\tilde d= 1/32$ and $\ep=1/100$. Four kinds of mesh size
are chosen: $h=1/64, 1/32, 1/16, 1/8$. The results are shown in
Table~\ref{Table:2}. Relative error in energy norm against the mesh
size $h$ is clearly shown in Figure~\ref{errorh}. It is easy to see
that as $h$ goes larger, the relative error in energy norm goes
larger, which is in agreement with the theoretical results in
Theorem~\ref{energeerror}. { We remark that the classical MsFEM suffers from the resonance error since the $H^1$--error estimate has the term $\epsilon/h$ due to the nonconforming error (see \cite{HWC}). But for MsDPGM, the error estimate in Theorem~\ref{energeerror}, and the numerical results in Table~\ref{Table:2} and Figure~\ref{errorh} show that the resonance error has been removed completely.}

\begin{table}[htp]
\caption{Error with respect to $h$. $\rho=\ep=1/100, \tilde
d=1/32,\ga_0=20$. }\label{Table:2}
\begin{center}
\begin{tabular}{|c|c|c|c|} \hline
 {Relative error}  &  $L^2$ & $L^\infty$ &Energy norm \\\hline
   $h=1/64$   &  0.1371e-01 & 0.1474e-01 & 0.1593e-00\\\hline
   $h=1/32$   &  0.1266e-01 & 0.1395e-01 & 0.1631e-00\\\hline
   $h=1/16$    &  0.1948e-01 & 0.2532e-01 & 0.1870e-00\\\hline
   $h=1/8$    &  0.5210e-01 & 0.8248e-01 & 0.2620e-00\\\hline
   \end{tabular}
\end{center}
\end{table}

\begin{figure}[htp]
\centerline{\includegraphics[scale=0.51]{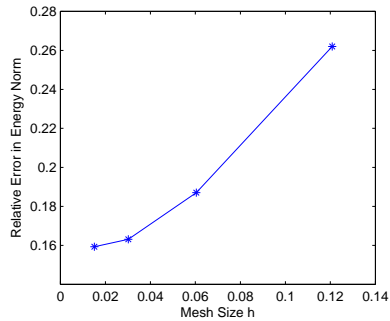}}
\caption{\label{errorh}Relative error with respect to $h$.}
\end{figure}

{\subsection{Affection of the size of the oversampling patches}

In this subsection we study how the size of oversampling elements affects the error.}
The experiment to verify the inequality \eqref{eq2} for the model example with coefficient \eqref{coef1} has been done in \cite{SDW2015}. The figures have been shown that $\norm{\na \eta^j}_{L^\infty(K)}\cdot d_{K}$ are bounded by a constant which is consistent with \eqref{eq2} (see Figure~5 in \cite{SDW2015}).

{ The following numerical experiment is to show how the oversampling size affects the error. Recalling the requirement of the oversampling size $d_K \geq \delta_0 h_K$, we show the relative oversampling size $\delta_0$ against the error. Note the distance $d=\text {min}_{K\in \T_h}d_K$. Here it is equivalent to $d \geq \delta_0 h$.}
{ We set $\rho=\ep=1/100, h=1/32$.
The result is shown in Table~\ref{d}. We can see that as $\delta_0$ (equivalently $\tilde d$) goes larger, the relative error in energy norm goes smaller, which is coincided with the theoretical results in Theorem~\ref{energeerror}. }We also notice that when $\tilde d$ is close to $\sqrt{\eps}$, the errors begin to decrease very slowly. Recall that there is a homogenization error $\sqrt{\eps}$ in the error estimate \eqref{energyest}. We think that when $d$ is large enough, $\sqrt{\eps}$ becomes the dominated error instead of $\eps/d$.

\begin{table}[htp]
\caption{Error with respect to $\delta_0$. $\rho=\ep=1/100, h=1/32,\ga_0=20$. }\label{d}
\begin{center}
\begin{tabular}{|c|c|c|c|} \hline
{Relative error}  &  $L^2$ & $L^\infty$ &Energy norm \\\hline
   $\delta_0=1/32$    &  0.4304e-01 & 0.4423e-01 & 0.2184e-00\\\hline
   $\delta_0=1/16$    &  0.2924e-01 & 0.3172e-01 & 0.1893e-00\\\hline
   $\delta_0=1/8$    &  0.1969e-01 & 0.2156e-01 & 0.1728e-00\\\hline
   $\delta_0=1/4$    &  0.1790e-01 & 0.2372e-01 & 0.1653e-00\\\hline
   $\delta_0=1/2$    &  0.1531e-01 & 0.1766e-01 & 0.1642e-00\\\hline
   $\delta_0=1$    &  0.1266e-01 & 0.1295e-01 & 0.1631e-00\\\hline
   $\delta_0=2$    &  0.1197e-01 & 0.1398e-01 & 0.1631e-00\\\hline
   \end{tabular}
\end{center}
\end{table}

\subsection{Application to multiscale problems on L--shape domain}
We consider the multiscale problem on the L--shaped domain of Figure \ref{perms1} with Dirichlet boundary condition so chosen that the true solution is $u=r^{\frac{1}{3}}\sin(2\theta/3)$ in polar coordinates. It is known that the solution has the singular behavior around reentrant corners. So the classical finite element method fails to provide satisfactory result.

\begin{figure}[htp]
\centerline{\includegraphics[scale=0.56]{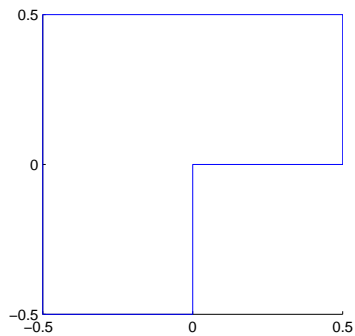}}
\caption{\label{perms1}The L--shape domain.}
\end{figure}

Firstly, we simulate the problem with coefficient given by \eqref{coef1}. We fix $\ep=1/100$ and choose $h=1/16$.
The relative error is shown in Table \ref{Table:5}. { We observe that both MsDPGM and MsDFEM give better approximation than the other MsPG methods.}

\begin{table}[htp]
\caption{Relative errors in the $L^2$ , $L^\infty$ and energy norm
for the L--shaped problem with periodic coefficient (\ref{coef1}).
$\rho=\ep=1/100, \tilde d=h=1/16,\ga_0=20$. }\label{Table:5}
\begin{center}
\begin{tabular}{|c|c|c|c|} \hline
 {Relative error}  &  $L^2$ & $L^\infty$ &Energy norm \\\hline
   MsPGM    &  0.7765e-02 & 0.3635e-01 & 0.2014e-00\\\hline
  OMsPGM   &  0.6285e-02 & 0.3277e-01 & 0.1035e-00\\\hline
  MsDFEM   &  0.3903e-02 & 0.2244e-01 & 0.9260e-01\\\hline
  MsDPGM    &  0.4654e-02 & 0.2299e-01 & 0.9275e-01\\\hline
   \end{tabular}
\end{center}
\end{table}

Secondly, we simulate the problem with the random log-normal
permeability field $\mathbf{a}(x)$, which is generated by using the
moving ellipse average \cite{dur} with the variance of the logarithm
of the permeability $\sigma^2 =1.0$, and the correlation lengths
$l_1=l_2=0.01$ in $x_1$ and $x_2$ directions, respectively. One
realization of the resulting permeability field is depicted in
Figure \ref{perms2}, where $\frac{a_{\rm max} (x)}{a_{\rm min}
(x)}=2.9642e+003$. In this test, we set $\rho=h=1/16$ since there is
no explicit $\ep$ in the example. The result is shown in Table
\ref{Table:6}. We can see that MsDPGM gives a better approximation
than the other MsPG methods, while standard MsPGM gives the wrong
approximation to the gradient of solution.
\begin{figure}[htp]
\centerline{\includegraphics[scale=0.56]{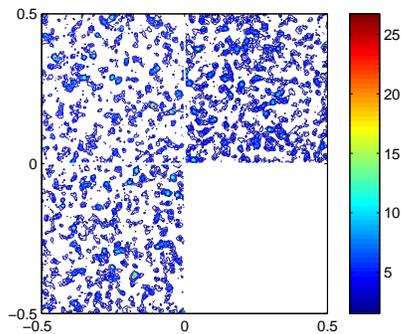}}
\caption{\label{perms2}The random log-normal permeability field
$\mathbf{a}(x)$. $\frac{a_{\rm max} (x)}{a_{\rm min} (x)}=2.9642e+003$.}
\end{figure}

\begin{table}[htp]
\caption{Relative errors in the $L^2$ , $L^\infty$ and energy norm
for the L--shaped problem with random coefficient $\sigma^2 =1.0$ and
$l_1=l_2=0.01$. $\tilde d=\rho=h=1/16,\ga_0=20$. }\label{Table:6}
\begin{center}
\begin{tabular}{|c|c|c|c|} \hline
 {Relative error}  &  $L^2$ & $L^\infty$ &Energy norm \\\hline
 MsPGM    &  0.9074e-00 & 0.1290e+01 & 0.6601e+02\\\hline
 OMsPGM   &  0.9307e-02 & 0.3851e-01 & 0.1428e-00\\\hline
 MsDFEM  &  0.6504e-02 & 0.3718e-01 & 0.9931e-01\\\hline
 MsDPGM   &  0.8587e-02 & 0.3810e-01 & 0.1013e-00\\\hline
   \end{tabular}
\end{center}
\end{table}

\section{Conclusion}
In this paper, we have proposed a new Petrov--Galerkin method based
on the discontinuous multiscale approximation space for the multiscale elliptic
problems. Under some
assumptions on the coefficients, we give the error analysis of our
method. The $H^1$--error is of the order
\begin{equation*}
    O\Big(\sqrt{\ep}+\frac{\ep}{d}+h+\frac{h^{3/2}}{\sqrt{\ep}}\Big),
\end{equation*}
which consists of the oversampling multiscale approximation error
and the error contributed by the penalty. Note that the unpleasant
resonance error does not appear. The reason is that our method uses
discontinuous piecewise linear functions as test functions, which is
only needed to estimate the interpolation error. Several numerical
experiments have { demonstrated} the efficiency of MsDPGM. 
{We also study the corresponding MsDFEM which coupling the classical oversampling multiscale basis with DGM. Our convergence analysis shows that MsDFEM can also eliminate the resonance error completely. That is the reason why MsDFEM is working as well as MsDPGM (even a little better). Furthermore, we can see that the CPU-time cost of MsDPGM for assembling the stiffness matrix is shorter than that of the MsDFEM due to its PG version. Therefore, we think that MsDPGM is a good choice when we need to take into consideration of the computational accuracy and the computer resource at the same time.}

{ We emphasize that the proposed method is not restrict to the periodic case. The numerical experiments show that it is applicable to the random coefficient case very well. However, with the classical oversampling multiscale basis function space introduced in \cite{HW}, the error estimate method is based on the classical homogenization theory, which needs the assumption that the oscillating coefficient is periodic. In the future work, we would like to combine the Petrov--Galerkin method with the new oversampling multiscale space \cite{Hp13} to consider the elliptic multiscale problems without any assumption on scale separation or periodicity. Besides, the introduced method may be inefficient for the multiscale problems which have some singularities, such as, the Dirac function singularities which stems from the simulation of steady flow transport through highly heterogeneous porous media driven by extraction wells \cite{CY2002}, or high-conductivity channels that connect the boundaries of coarse-grid blocks \cite{EGW2011}. To solve these problems, it needs some special definition of the multiscale basis functions around the channels such as the local spectral basis functions (see \cite{EGW2011}), or local refinement of the elements near the channels (see \cite{DW2014}). We will couple these techniques with the introduced method in our future work. Finally, we remark that Generalized Multiscale Finite Element method coupling DGM was explored in \cite{ya2013}. The computation is divided into two stages: offline and online. In the offline stage, they construct a reduced dimensional multiscale space
to be used for rapid computations in the online stage. In the online stage, they use the basis functions computed offline to solve the problem for current realization of the parameters. Similar to MsDPGM, in the online stage we can use the Petrov--Galerkin version of DGM to solve the problem with the basis functions computed offline, which leads to a kind of  Generalized Multiscale Discontinuous Petrov-Galerkin method. The difficulty is the choice of the test function space and the proof of inf-sup condition, which is worth studying.}

\section*{Acknowledgments}

The authors would like to thank the referees for their carefully
reading and constructive comments that improved the paper.

\appendix
\section{Proof of Theorem~\ref{h2homoerror}}\label{AA}

The following theorem plays an important role in our analysis (cf.
\cite{CH2002, CW2010}).

\begin{Theorem}\label{errorH1} Assume that $ u_0 \in H^2(\Om)\cap W^{1,\infty}(\Om)$. There exists a constant $C$ independent of $u_0, \ep, \Om$ such that
\begin{equation*}
\begin{split}
  &||u_{\ep}-u_1-\ep\theta_\ep||_{H^1(\Om)} \leq C \ep|u_0|_{H^2(\Om)},\\
  &||\ep\theta_\ep||_{H^1(\Om)} \leq  C \sqrt{\ep}|u_0|_{W^{1,\infty}(\Om)}+ C \ep|u_0|_{H^2(\Om)},
\end{split}
\end{equation*}
where $\theta_\ep$ denote the boundary corrector defined by
\begin{equation}
\begin{aligned}
       -\na\cdot(\mathbf{a}^\ep\na\theta_\ep) & = 0 &&  \mbox{in }\Om,\\
       \theta_\ep&=-\chi^j(x/\varepsilon)\frac{\pa
u_0(x)}{\pa x_j} &&  \mbox{on
       }\pa\Om.
\end{aligned}
\end{equation}
\end{Theorem}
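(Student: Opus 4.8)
The plan is to follow the classical energy method of periodic homogenization, handling the two assertions separately. Write $\mathbf{a}^0=(a^0_{lj})$ for the constant homogenized matrix and introduce the homogenized flux matrix $\sigma_{lj}(y):=a_{lj}(y)+a_{lk}(y)\,\pa_{y_k}\chi^j(y)$, so that the cell problem is exactly $\pa_{y_l}\sigma_{lj}=0$ and $a^0_{lj}=\frac{1}{|Y|}\int_Y\sigma_{lj}\,\dy$. Differentiating $u_1=u_0+\ep\chi^j(x/\ep)\pa_j u_0$ gives, componentwise, $(\mathbf{a}^\ep\na u_1)_l=\sigma_{lj}(x/\ep)\,\pa_j u_0+\ep\,a_{lk}(x/\ep)\chi^j(x/\ep)\,\pa_k\pa_j u_0$. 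For the first estimate I set $w_\ep:=u_\ep-u_1-\ep\theta_\ep$ and first check $w_\ep\in H^1_0(\Om)$: on $\pa\Om$ we have $u_\ep=0$ and $u_0=0$, so $u_1=\ep\chi^j(x/\ep)\pa_j u_0$ cancels $\ep\theta_\ep=-\ep\chi^j(x/\ep)\pa_j u_0$. Testing the weak forms for $u_\ep$, for the $\mathbf{a}^\ep$-harmonic $\theta_\ep$, and for the homogenized $u_0$ against an arbitrary $\phi\in H^1_0(\Om)$, the $f$-terms telescope and leave
\[
\int_\Om\mathbf{a}^\ep\na w_\ep\cdot\na\phi\,\dx=-\int_\Om\big(\sigma_{lj}(x/\ep)-a^0_{lj}\big)\pa_j u_0\,\pa_l\phi\,\dx-\ep\int_\Om a_{lk}\chi^j\,\pa_k\pa_j u_0\,\pa_l\phi\,\dx.
\]
The last integral is immediately $\ls\ep\,\abs{u_0}_{H^2(\Om)}\norm{\na\phi}_{L^2(\Om)}$; the first carries the oscillation at order $O(1)$ and is the crux.

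To gain the missing factor $\ep$ I would invoke the flux corrector. Since $b_{lj}:=\sigma_{lj}-a^0_{lj}$ is $Y$-periodic, mean-zero and divergence-free in $l$ (again $\pa_{y_l}\sigma_{lj}=0$), there is a $Y$-periodic potential $B^m_{lj}$, antisymmetric in $(m,l)$ (so $B^m_{lj}=-B^l_{mj}$), with $\pa_{y_m}B^m_{lj}=b_{lj}$, whence $b_{lj}(x/\ep)=\ep\,\pa_{x_m}[B^m_{lj}(x/\ep)]$. The antisymmetry is used twice. First, $b_{lj}(x/\ep)\,\pa_l\phi=\ep\,\pa_{x_m}\big(B^m_{lj}(x/\ep)\pa_l\phi\big)$ as distributions, because the would-be term $\ep\,B^m_{lj}\pa_m\pa_l\phi$ vanishes (antisymmetric contracted with the symmetric Hessian, valid even for $\phi\in H^1(\Om)$ since distributional mixed partials commute). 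Integrating by parts against $\pa_j u_0$ moves a derivative onto $u_0$, producing a bulk term $\ls\ep\,\norm{B}_{L^\infty}\abs{u_0}_{H^2(\Om)}\norm{\na\phi}_{L^2(\Om)}$ plus a boundary term $\ep\int_{\pa\Om}B^m_{lj}(x/\ep)\,\pa_l\phi\,\pa_j u_0\,n_m\,\ds$. Second, choosing $\phi=w_\ep$, the trace $w_\ep=0$ on $\pa\Om$ forces the tangential gradient to vanish, so $\pa_l w_\ep|_{\pa\Om}=(\pa_n w_\ep)\,n_l$; then $\sum_{m,l}B^m_{lj}n_l n_m=0$ is once more antisymmetric-times-symmetric, and the boundary term drops. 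With $\phi=w_\ep$, ellipticity and Poincar\'e then yield $\norm{w_\ep}_{H^1(\Om)}\ls\ep\,\abs{u_0}_{H^2(\Om)}$, the first claim.

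For the second estimate I would use that $\theta_\ep$ minimizes the $\mathbf{a}^\ep$-energy among $H^1$ functions with its boundary trace, so $\int_\Om\mathbf{a}^\ep\na\theta_\ep\cdot\na\theta_\ep\le\int_\Om\mathbf{a}^\ep\na G\cdot\na G$ for any extension $G$ of $-\chi^j(x/\ep)\pa_j u_0$. The naive global extension $G=-\chi^j(x/\ep)\pa_j u_0$ loses a full power of $\ep$; instead I localize it to an $O(\ep)$-thick boundary layer, taking $G=-m(x)\chi^j(x/\ep)\pa_j u_0$ with a cutoff $m$ equal to $1$ on $\pa\Om$, supported within distance $\ep$ of $\pa\Om$ and with $|\na m|\ls 1/\ep$. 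Each of the three pieces of $\na G$ carries at worst a factor $1/\ep$ (from $\na_y\chi$ or from $\na m$) but lives on a set of measure $O(\ep)$, so $\norm{\na G}_{L^2(\Om)}^2\ls\ep^{-1}\abs{u_0}_{W^{1,\infty}(\Om)}^2+\abs{u_0}_{H^2(\Om)}^2$. Multiplying by $\ep^2$ and bounding the gradient part by ellipticity, together with Poincar\'e applied to $\theta_\ep-G\in H^1_0(\Om)$ and $\norm{G}_{L^2(\Om)}\ls\sqrt\ep\,\abs{u_0}_{W^{1,\infty}(\Om)}$ for the $L^2$ part, gives $\norm{\ep\theta_\ep}_{H^1(\Om)}\ls\sqrt\ep\,\abs{u_0}_{W^{1,\infty}(\Om)}+\ep\,\abs{u_0}_{H^2(\Om)}$.

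The main obstacle is the order-$O(1)$ oscillatory integral in the first estimate: the entire gain of $\ep$ rests on the existence and boundedness of the antisymmetric flux potential $B^m_{lj}$ and on exploiting its antisymmetry both to discard the uncontrollable Hessian-of-$\phi$ term and to annihilate the boundary contribution via $w_\ep\in H^1_0(\Om)$. Establishing $B\in L^\infty$ with the required mapping properties relies on the coefficient regularity $a_{ij}\in W^{2,p}(Y)$, $p>n$ (which by elliptic regularity and Sobolev embedding passes to $\chi^j$ and hence to $\sigma_{lj}$ and $B$). A secondary technical point is the careful $O(\ep)$-layer bookkeeping for the second estimate on the polyhedral domain $\Om$.
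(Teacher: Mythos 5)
The paper never proves this theorem: it is imported verbatim from \cite{CH2002,CW2010}, and the only piece of its proof machinery displayed in the paper is the identity \eqref{e20} in Appendix~A, with the skew-symmetric matrices $\alpha_{ij}^k$ satisfying $G_i^k=\pa_{y_j}\alpha_{ij}^k$. Your proposal reconstructs exactly that classical argument: your divergence-free, mean-zero flux difference $b_{lj}=\sigma_{lj}-a^0_{lj}$ is the paper's $-G^k_i$ up to sign and index conventions, your antisymmetric potential $B^m_{lj}$ is the paper's $\alpha_{ij}^k$, and your error equation for $w_\ep=u_\ep-u_1-\ep\theta_\ep$ tested against $H^1_0$ functions is the identity \eqref{e20} after the corrector has been inserted. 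The second estimate, via energy minimality of the $\mathbf{a}^\ep$-harmonic extension and a cutoff $m$ supported in an $O(\ep)$-layer with $|\na m|\ls 1/\ep$, is likewise the standard proof behind the citation (the $\sqrt{\ep}$ comes precisely from the layer measure, as you compute). So in decomposition and in the two key mechanisms --- the antisymmetric flux potential to gain the factor $\ep$, and the boundary-layer cutoff for $\theta_\ep$ --- your route coincides with the intended (cited) proof.

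One step deserves repair. Your boundary-term cancellation invokes the pointwise identity $\pa_l w_\ep|_{\pa\Om}=(\pa_n w_\ep)\,n_l$, which presupposes a trace of $\na w_\ep$; under the stated hypotheses $u_0\in H^2(\Om)\cap W^{1,\infty}(\Om)$ one only has $w_\ep\in H^1_0(\Om)$ (note $u_1\notin H^2$ without $u_0\in H^3$), so that trace does not exist. The conclusion is nonetheless correct and can be rescued without extra regularity: the field $V_m=B^m_{lj}(x/\ep)\,\pa_l w_\ep$ has $\mathrm{div}\,V=\ep^{-1}b_{lj}(x/\ep)\,\pa_l w_\ep\in L^2(\Om)$ --- by the very antisymmetry you use to discard the Hessian term --- so $V\in H(\mathrm{div};\Om)$ has a well-defined normal trace in $H^{-1/2}(\pa\Om)$, which vanishes by approximating $w_\ep$ by $C_c^\infty(\Om)$ functions; alternatively, follow the bookkeeping of \cite{CH2002} and place $\pa_j u_0$ \emph{inside} the divergence, writing $b_{lj}(x/\ep)\pa_j u_0=\ep\,\pa_{x_m}\bigl(B^m_{lj}(x/\ep)\pa_j u_0\bigr)-\ep\,B^m_{lj}(x/\ep)\pa_m\pa_j u_0$, so that testing against $\phi\in H^1_0(\Om)$ uses only the zero trace of $\phi$ and no boundary term ever appears --- this is how \eqref{e20} holds for all $\varphi\in H^1_0(\Om)$. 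A minor further caveat: as written, your constants depend on $\Om$ through Poincar\'e and through the layer measure $|\{x:\mathrm{dist}(x,\pa\Om)<\ep\}|$, whereas the statement asserts independence of $\Om$ (obtained in the cited source by a scaling argument); this is immaterial for the paper's use on a fixed domain.
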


We first estimate $ \abs{\ep\theta_{\ep}}_{H^2(\Om)} $.
\begin{lemma}\label{eh2homoerror}Assume that $ u_0 \in H^2(\Om)\cap W^{1,\infty}(\Om)$. Then the following estimate holds:
\begin{equation}\label{eh2homoerro}
    \abs{\ep\theta_{\ep}}_{H^2(\Om)}\ls \frac{1}{\sqrt{\ep}}\abs{u_0}_{W^{1,\infty}(\Om)}+\abs{u_0}_{H^2(\Om)}.
\end{equation}
\end{lemma}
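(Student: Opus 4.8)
The plan is to pass to the non-divergence form of the corrector equation and then invoke second-order elliptic regularity, feeding in the $H^1$ bound already recorded in Theorem~\ref{errorH1}. Write $v:=\ep\theta_\ep$, so that $-\na\cdot(\mathbf{a}^\ep\na v)=0$ in $\Om$ with the oscillatory Dirichlet data $v=-\ep\chi^j(x/\ep)\frac{\pa u_0}{\pa x_j}$ on $\pa\Om$. Expanding the divergence and using $\pa_i a_{ij}^\ep=\ep^{-1}(\pa_{y_i}a_{ij})(x/\ep)$ gives the pointwise identity
\begin{equation*}
  a_{ij}^\ep\,\pa_{ij} v=-\frac1\ep(\pa_{y_i}a_{ij})(x/\ep)\,\pa_j v .
\end{equation*}
Since $a_{ij}\in W^{2,p}(Y)$ with $p>n$, we have $\na_y a_{ij}\in W^{1,p}(Y)\hookrightarrow C^0(\bar Y)$, so $\na_y a_{ij}$ is bounded and hence $\norml{a_{ij}^\ep\pa_{ij}v}{\Om}\ls \ep^{-1}\norml{\na v}{\Om}$.

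First I would convert this right-hand side into the target form. By Theorem~\ref{errorH1}, $\norml{\na v}{\Om}\le\norm{\ep\theta_\ep}_{H^1(\Om)}\ls\sqrt\ep\,\abs{u_0}_{W^{1,\infty}(\Om)}+\ep\,\abs{u_0}_{H^2(\Om)}$, whence
\begin{equation*}
  \norml{a_{ij}^\ep\pa_{ij}v}{\Om}\ls\frac1\ep\norml{\na v}{\Om}\ls\frac1{\sqrt\ep}\abs{u_0}_{W^{1,\infty}(\Om)}+\abs{u_0}_{H^2(\Om)},
\end{equation*}
which is exactly the bound claimed in \eqref{eh2homoerro}. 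It therefore remains to control $\abs{v}_{H^2(\Om)}$ by $\norml{a_{ij}^\ep\pa_{ij}v}{\Om}$ together with the boundary data and a lower-order term.

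For this I would apply the $L^2$-based second-order a priori estimate for the non-divergence operator $a_{ij}^\ep\pa_{ij}$,
\begin{equation*}
  \abs{v}_{H^2(\Om)}\ls\norml{a_{ij}^\ep\pa_{ij}v}{\Om}+\norm{v}_{H^{3/2}(\pa\Om)}+\norm{v}_{H^1(\Om)},
\end{equation*}
and estimate the boundary term using $v|_{\pa\Om}=-\ep\chi^j(x/\ep)\frac{\pa u_0}{\pa x_j}$: under tangential differentiation the prefactor $\ep$ cancels one factor $\ep^{-1}$ from each derivative of $\chi^j(x/\ep)$, so $\norm{v}_{H^1(\pa\Om)}\ls\abs{u_0}_{W^{1,\infty}(\Om)}$ while $\norm{v}_{H^2(\pa\Om)}\ls\ep^{-1}\abs{u_0}_{W^{1,\infty}(\Om)}$; interpolating $H^{3/2}=[H^1,H^2]_{1/2}$ then yields $\norm{v}_{H^{3/2}(\pa\Om)}\ls\ep^{-1/2}\abs{u_0}_{W^{1,\infty}(\Om)}$, again matching the right-hand side of \eqref{eh2homoerro}. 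The lower-order term $\norm{v}_{H^1(\Om)}$ is already controlled by Theorem~\ref{errorH1}.

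The hard part will be justifying that the constant in the second-order a priori estimate can be taken independent of $\ep$, despite the coefficients $a_{ij}^\ep$ oscillating at scale $\ep$ and hence having gradients of size $\ep^{-1}$. The key point is that the relevant quantity for the $W^{2,2}$-estimate is not the gradient of the coefficients but their local continuity—equivalently their $\mathrm{VMO}$—modulus, which is scale invariant; since $a_{ij}$ is continuous on $\bar Y$, the modulus of $a_{ij}(\cdot/\ep)$ is bounded uniformly in $\ep$, so the regularity constant is $\ep$-independent. (Where $\pa\Om$ is only piecewise smooth one localizes away from corners, as is standard for these corrector estimates.) Combining the three displays then gives \eqref{eh2homoerro}.
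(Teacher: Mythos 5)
Your overall reduction---pass to non-divergence form, bound $a_{ij}^\ep\pa_{ij}v$ by $\ep^{-1}\norml{\na v}{\Om}$ and absorb that via Theorem~\ref{errorH1}, then invoke an $H^2$ a priori estimate---is structurally the same as the paper's, which also ultimately rests on an $\ep$-uniform second-order estimate (cited from Grisvard). Where you genuinely diverge is the boundary data: the paper does not estimate $v$ on $\pa\Om$ at all. It sets $w=\theta_\ep+\xi\,\chi^j(x/\ep)\frac{\pa u_0}{\pa x_j}$ with a cut-off $\xi$ equal to $1$ on $\pa\Om$ and supported in an $O(\ep)$ boundary strip, so that $w\in H^1_0(\Om)$ solves a homogeneous Dirichlet problem with an explicit right-hand side; all derivatives of $u_0$ then enter only through \emph{interior} $L^2$ norms over the thin strip (whose measure $O(\ep)$ supplies the $\sqrt\ep$ gains). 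Your trace-interpolation route, by contrast, does not close, and this is the first genuine gap: on $\pa\Om$ one has $\pa_\tau v=-\ep\big[\ep^{-1}(\na_y\chi^j\cdot\tau)\,\pa_j u_0+\chi^j\,\pa_\tau\pa_j u_0\big]$, and your accounting only covers the derivative falling on $\chi^j(x/\ep)$. The term $\ep\,\chi^j\pa_\tau\pa_ju_0$ requires the trace of $\na^2u_0$ on $\pa\Om$, which for $u_0\in H^2(\Om)\cap W^{1,\infty}(\Om)$ is only in $H^{-1/2}(\pa\Om)$, not $L^2(\pa\Om)$; the claimed bound $\norm{v}_{H^2(\pa\Om)}\ls\ep^{-1}\abs{u_0}_{W^{1,\infty}(\Om)}$ would need third tangential derivatives of $u_0$ on the boundary. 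So the $H^{3/2}(\pa\Om)$ interpolation step cannot be justified under the stated hypotheses.

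The second gap is your justification of the $\ep$-independence of the constant in the non-divergence $W^{2,2}$ estimate. The VMO modulus of $a(\cdot/\ep)$ is \emph{not} uniformly controlled: for any fixed radius $r\gg\ep$ the mean oscillation of $a(\cdot/\ep)$ over $B_r$ equals the full cell-averaged oscillation $\fint_Y\abs{a-\bar a}$, a fixed positive number, so the scale $r_0(\ep)$ at which the VMO modulus becomes small is $O(\ep)$ and the constants in the Chiarenza--Frasca--Longo/Krylov theory degenerate as $\ep\to0$. (This degeneration is precisely why uniform regularity for oscillating coefficients is a nontrivial subject.) What actually saves the argument in the paper's setting $n=2$ is that uniform ellipticity implies the Cordes condition, so the Talenti-type $W^{2,2}$ estimate holds with a constant depending only on $\lambda,\Lambda$ and requiring no continuity of the coefficients; if you replace the VMO claim by that (and handle the corners of the polygonal domain, as the paper does through Grisvard's Theorem 4.3.1.4), this part of your argument can be repaired. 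As written, however, both the boundary-trace step and the uniformity claim are incorrect, and the cut-off lifting used in the paper is the cleaner way to avoid the first problem entirely.
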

\begin{proof}
We only consider the case where $n=2$. For $n=3$, the proof is similar.
Let $\xi \in C_{0}^{\infty}(\mathbf{R}^2)$ be the cut-off function such that $ 0 \leq \xi \leq 1, \xi=1$ in $\Om \setminus \Om_{\ep/2}, \xi=0 $ in $\Om_{\ep}$, and $\abs{\na \xi} \leq C/ \ep, \abs{\na^2 \xi} \leq C/ \ep^2$ in $\Om$, where $\Om_{\ep}:=\{x: \text{dist}\{x,\pa \Om\}\geq \ep \}$. Then
\begin{equation*}
   v=\theta_{\ep}+\xi(\chi^j \frac{\pa u_0}{\pa x_j})\in H_0^1(\Om)
\end{equation*}
satisfies
\begin{equation}
     -\na \cdot(\mathbf{a}^\ep\na v) =-\na \cdot(\mathbf{a}^\ep\na (\xi\chi^j \frac{\pa u_0}{\pa x_j}))  \qquad \text{ in } \Om,\qquad v|_{\partial \Om}=0.
\end{equation}
By use of Theorem 4.3.1.4 in \cite{Grisvard} , and together with Theorem \ref{errorH1}, we have
\begin{equation*}
\begin{split}
   \abs{v}_{H^2(\Om)}&\ls \frac{1}{\ep}\norml{v}{\Om}+\norml{\na \cdot(\mathbf{a}^\ep\na (\xi\chi^j \frac{\pa u_0}{\pa x_j}))}{\Om} \\ &\ls \frac{\sqrt{\ep}}{\ep^2}\abs{u_0}_{W^{1,\infty}(\Om)}+\frac{1}{\ep}\abs{u_0}_{H^2(\Om)},
   \end{split}
\end{equation*}
which implies
\begin{equation}\label{global1}
    \abs{\theta_{\ep}}_{H^2(\Om)}\ls \frac{\sqrt{\ep}}{\ep^2}\abs{u_0}_{W^{1,\infty}(\Om)}+\frac{1}{\ep}\abs{u_0}_{H^2(\Om)}.
\end{equation}
This completes the proof.
\end{proof}

\noindent{\bf Proof of Theorem~\ref{h2homoerror}.}
It is shown that, for any $\varphi\in H_0^1(\Om)$ (see \cite[p.550]{CH2002} or \cite[p.125]{CW2010}),
\begin{equation}\label{e20}
\begin{split}
 &\left(\mathbf{a}(x/\varepsilon)\nabla(u_\ep-u_1),\nabla
 \varphi\right)_{\Om}\\
 &\quad=(\mathbf{a}^*\nabla u_0, \nabla
 \varphi)_{\Om}-\left(\mathbf{a}(x/\varepsilon)\nabla\left(u_0+\varepsilon\chi^k\frac{\partial
 u_0}{\partial x_k}\right),\nabla\varphi \right)_{\Om}\\
 &\quad=\varepsilon\int_{\Om} a_{ij}(x/\varepsilon)\chi^k
 \frac{\partial^2 u_0}{\partial x_j\partial
 x_k}\frac{\partial\varphi}{\partial x_i}\dx-\varepsilon\int_{\Om}
 \alpha_{ij}^k(x/\varepsilon) \frac{\partial^2 u_0}{\partial
 x_j\partial x_k}\frac{\partial\varphi}{\partial x_i}\dx,
 \end{split}
\end{equation}
where
$\alpha^k(x/\varepsilon)=(\alpha_{ij}^k(x/\varepsilon))$ are skew-symmetric matrices which satisfy that (see \cite[p.6]{JKO})
\begin{equation*}
G_i^k(y)=\frac{\partial}{\partial y_j}(\alpha_{ij}^k(y)),\qquad
\int_Y \alpha_{ij}^k(y) \dy=0
\end{equation*}
with
\[
G_i^k=a_{ik}^*-a_{ij}\left(\delta_{kj}+\frac{\partial\chi^k}{\partial
y_j}\right).
\]
From \eqref{e20}, it follows that,
\[
\na\cdot\left(\mathbf{a}(x/\varepsilon)\nabla(u_\ep-u_1)\right)=\ep\frac{\pa}{\pa x_i}\left(a_{ij}(x/\varepsilon)\chi^k
 \frac{\partial^2 u_0}{\partial x_j\partial
 x_k}-\alpha_{ij}^k(x/\varepsilon) \frac{\partial^2 u_0}{\partial
 x_j\partial x_k}\right),
\]
which combines the definition of $\theta_\ep$ yield
\[
\na\cdot\left(\mathbf{a}(x/\varepsilon)\nabla(u_\ep-u_1-\ep \theta_{\ep})\right)=\ep\frac{\pa}{\pa x_i}\left(a_{ij}(x/\varepsilon)\chi^k
 \frac{\partial^2 u_0}{\partial x_j\partial
 x_k}-\alpha_{ij}^k(x/\varepsilon) \frac{\partial^2 u_0}{\partial
 x_j\partial x_k}\right).
\]
Thus, from Theorem 4.3.1.4 in \cite{Grisvard}, it follows that
\begin{equation}\label{global}
    \abs{u_\ep-u_1-\ep \theta_{\ep}}_{H^2(\Om)}\ls \frac{1}{\ep}\norml{u_\ep-u_1}{\Om}+\abs{u_0}_{H^2(\Om)}+\ep\abs{u_0}_{H^3(\Om)},
\end{equation}
which combing \eqref{eh2homoerro} and Theorem \ref{errorH1}, yield \eqref{h2homoestimate} immediately.
\endproof

{
\section{Theoretical Results of MsDFEM}\label{AB}

We give some theoretical results of MsDFEM here for convenience of the reader. Detailed analysis can be found in the first author's PHD thesis \cite{s2016}.
\begin{lemma}
We have
\begin{equation}\label{cont1}
    \abs{a(\tilde{u}_h,v_h)}\leq C\norm{\tilde{u}_h}_{E}\norm{v_h}_{E}  \qquad\forall\, \tilde{u}_h,v_h \in V_{h,dc}^{ms}.
\end{equation}
Further, let the assumptions of Lemma \ref{inverse1} be fulfilled and $\ga_0$ is large enough, then
\begin{equation}\label{coer1}
   a(v_{h}, v_{h})\geq \frac{1}{2}\norm{v_h}_{E}^2  \qquad\forall\, v_{h} \in V_{h,dc}^{ms}.
\end{equation}
Here
\begin{equation*}
 \begin{split}
 \norm{v}_{E}:=&\Big(\sum_{K\in\T_h}\int_K \mathbf{a}^\ep |\na v|^2\dx
 +\sum_{e\in\Ga_h\cup \pa \Om}\frac{\rho}{\gamma _0}\int_{e} \av{\mathbf{a}^\ep \na v\cdot\bn}^2\ds\\
 &\qquad\qquad+ \sum_{e\in\Ga_h\cup \pa \Om}\frac{\gamma _0}{\rho}\int_{e} \jm{v}^2\ds
 \Big)^{1/2}\qquad\forall v\in V_{h,dc}^{ms}.
 \end{split}
 \end{equation*}
\end{lemma}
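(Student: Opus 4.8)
The plan is to establish \eqref{cont1} and \eqref{coer1} by following the argument of Theorem~\ref{ccPG}, observing that the MsDFEM form $a(\cdot,\cdot)$ is obtained from $a_h(\cdot,\cdot)$ simply by dropping the transfer operator $\Pi_h$ in every edge term and in the volume term. Consequently the proof is in fact a \emph{simplification} of the MsDPGM case: Lemma~\ref{stab1} is no longer needed to control the volume term, because the weight $\mathbf{a}^\ep$ already furnishes coercivity directly, $\sum_{K}\int_K\mathbf{a}^\ep|\na v_h|^2\dx=\sum_K\norml{(\mathbf{a}^\ep)^{1/2}\na v_h}{K}^2$.

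For the continuity \eqref{cont1} (valid for any $\be$) I would bound each of the four terms defining $a(\tilde u_h,v_h)$ by the Cauchy--Schwarz inequality. The volume term is dominated by $\big(\sum_K\norml{(\mathbf{a}^\ep)^{1/2}\na\tilde u_h}{K}^2\big)^{1/2}\big(\sum_K\norml{(\mathbf{a}^\ep)^{1/2}\na v_h}{K}^2\big)^{1/2}$, i.e. the first component of each $\norm{\cdot}_{E}$. For the two flux edge terms I would insert the factor $1=\sqrt{\rho/\ga_0}\cdot\sqrt{\ga_0/\rho}$ on each edge, so that $\norml{\av{\mathbf{a}^\ep\na\tilde u_h\cdot\bn}}{e}$ pairs with the flux component of $\norm{\tilde u_h}_{E}$ while $\norml{\jm{v_h}}{e}$ pairs with the jump component of $\norm{v_h}_{E}$ (and symmetrically for the $\be$-term). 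The penalty term $J_0$ matches the jump components directly. A final discrete Cauchy--Schwarz over the edges and elements then yields \eqref{cont1}.

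For the coercivity \eqref{coer1} I would fix $\be=-1$ and $\rho=\ep$ as in the rest of the analysis, so that the two edge terms coincide and $a(v_h,v_h)=\sum_K\int_K\mathbf{a}^\ep|\na v_h|^2\dx-2\sum_e\int_e\av{\mathbf{a}^\ep\na v_h\cdot\bn}\jm{v_h}\ds+\sum_e\frac{\ga_0}{\ep}\norml{\jm{v_h}}{e}^2$. Applying Young's inequality to the cross term in the form $2\,\av{\mathbf{a}^\ep\na v_h\cdot\bn}\jm{v_h}\le \frac{\ga_0}{2\ep}\jm{v_h}^2+\frac{2\ep}{\ga_0}\av{\mathbf{a}^\ep\na v_h\cdot\bn}^2$ leaves half of the jump penalty intact; then, splitting $-\frac{2\ep}{\ga_0}=\tfrac12\frac{\ep}{\ga_0}-\tfrac52\frac{\ep}{\ga_0}$, I keep $\tfrac12\frac{\ep}{\ga_0}\norml{\av{\mathbf{a}^\ep\na v_h\cdot\bn}}{e}^2$ to build the flux component of $\norm{v_h}_{E}$ and absorb the remaining $\tfrac52\frac{\ep}{\ga_0}\norml{\av{\mathbf{a}^\ep\na v_h\cdot\bn}}{e}^2$ into the volume term. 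This absorption is exactly the estimate \eqref{eqconce2}. Taking $\ga_0$ large enough that $\frac{5C_1}{2\ga_0}\le\frac12$ then gives $a(v_h,v_h)\ge\frac12\norm{v_h}_{E}^2$, which is the clean constant in \eqref{coer1}; the same coefficient split is what produces the chain \eqref{eqconce1}--\eqref{eqconce2} for MsDPGM.

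The only genuinely multiscale, and hence the only delicate, ingredient is the absorption step \eqref{eqconce2}: it is precisely here that one needs the inverse estimate $|v_h|_{H^2(K)}\ls \ep^{-1}\norml{\na v_h}{K}$ valid for $v_h\in\text{OMS}(K)$ (Lemma~\ref{inverse1}), rather than the mesh-based bound $h^{-1}$ available for piecewise polynomials, combined with the trace inequality of Lemma~\ref{trace1} and the hypothesis $\ep\ls h\ls d$; this reflects the fact that the normal flux $\mathbf{a}^\ep\na v_h\cdot\bn$ oscillates on the $\ep$-scale. I expect everything else to be routine, as the bookkeeping is identical in spirit to the coercivity argument already carried out in Theorem~\ref{ccPG}.
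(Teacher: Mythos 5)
Your proposal is correct and follows essentially the same route the paper indicates: Cauchy--Schwarz for \eqref{cont1}, and for \eqref{coer1} a Young's-inequality split of the cross term followed by absorption of the flux term into the volume term via the trace inequality of Lemma~\ref{trace1} and the inverse estimate of Lemma~\ref{inverse1}, with $\ga_0$ taken large. Your observations that Lemma~\ref{stab1} becomes unnecessary (since the volume term is already $\sum_K\norml{(\mathbf{a}^\ep)^{1/2}\na v_h}{K}^2$, which is why the clean constant $\tfrac12$ replaces the $\kappa$ of Theorem~\ref{ccPG}) and that the $\ep$-scale inverse estimate is the only genuinely multiscale ingredient both match the paper's intent.
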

Using the definition of the above norm, the Cauchy-Schwarz inequality and \eqref{trace}, Lemma~\ref{inverse1}, we can obtain \eqref{cont1} and \eqref{coer1} immediately. The proof is similar to Theorem~\ref{ccPG} and is omitted here.

\begin{Theorem}\label{energeerror112}Let $u_\ep$ be the solution of \eqref{eproblem}, and let $\tilde{u}_h$ be the numerical solution computed by MsDFEM defined in (\ref{eifem}). Assume that $u_0 \in H^3(\Om), f \in L^2(\Om)$, $\ep \ls h \ls d$,
and that the penalty parameter $\ga_0$ is large enough. Then there
exits a constant $\ga$ independent of $h$ and $\ep$ such that if
$\ep/h_K \leq \ga$ for all $K \in \T_h$, the following error
estimate holds:
\begin{equation}\label{energeerror111}
  \norm{u_\ep-\tilde{u}_{h}}_{E}
  \lesssim h+\frac{h^{3/2}}{\sqrt{\ep}}
  +\sqrt{\ep}+\frac{\ep}{d},
\end{equation}
where $d=\text {min}_{K\in \T_h}d_K$.
\end{Theorem}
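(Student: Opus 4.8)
The plan is to follow the same three-stage strategy as in the proof of Theorem~\ref{energeerror}, but to exploit the genuine Galerkin orthogonality available for MsDFEM, since here the trial and test spaces coincide and no transfer operator enters the bilinear form. First I would establish a C\'ea-type estimate $\norm{u_\ep-\tilde u_h}_E\ls \inf_{w_h\in V_{h,dc}^{ms}}\norm{u_\ep-w_h}_E$. Given any $w_h\in V_{h,dc}^{ms}$, set $v_h=\tilde u_h-w_h$; by the coercivity \eqref{coer1}, $\tfrac12\norm{v_h}_E^2\le a(v_h,v_h)=a(\tilde u_h,v_h)-a(w_h,v_h)$. Using the MsDFEM equation \eqref{eifem} to replace $a(\tilde u_h,v_h)=(f,v_h)$, and then the DG weak form \eqref{weakformula} for the exact solution to write $(f,v_h)=a(u_\ep,v_h)$ (valid because $v_h\in V_{h,dc}^{ms}\subset H^s(\T_h)$), yields $a(v_h,v_h)=a(u_\ep-w_h,v_h)$. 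A termwise Cauchy--Schwarz estimate of the four terms of $a(\cdot,\cdot)$ against the three pieces of $\norm{\cdot}_E$ then gives $a(u_\ep-w_h,v_h)\ls \norm{u_\ep-w_h}_E\norm{v_h}_E$, whence $\norm{\tilde u_h-w_h}_E\ls\norm{u_\ep-w_h}_E$, and the claim follows by the triangle inequality.

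Second, I would choose the multiscale interpolant $w_h=\psi_h$ defined in \eqref{intplMsFEM} and estimate the three contributions to $\norm{u_\ep-\psi_h}_E$ separately. The volume (energy) term $\sum_K\|(\mathbf a^\ep)^{1/2}\na(u_\ep-\psi_h)\|_{L^2(K)}^2$ is already controlled by \eqref{estr0}, contributing $h+\sqrt{\ep}+\ep/d$. The flux-penalty term $\sum_e\frac{\ep}{\ga_0}\norml{\av{\mathbf a^\ep\na(u_\ep-\psi_h)\cdot\bn}}{e}^2$ is exactly the quantity $\mathrm I$ treated in the proof of Theorem~\ref{energeerror}; applying the trace inequality (Lemma~\ref{trace1}), Theorem~\ref{h2homoerror} and Lemma~\ref{lmultiapp} reproduces \eqref{I}, contributing $h+\sqrt{\ep}+\ep^2$.

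The genuinely new, and hardest, step is the jump-penalty term $\sum_e\frac{\ga_0}{\ep}\norml{\jm{u_\ep-\psi_h}}{e}^2$. Here the projection $\Pi_h$ is absent, so, unlike in Theorem~\ref{energeerror}, one cannot reduce the jump to the continuous piecewise-linear interpolation error; the multiscale interpolant $\psi_h$ is itself discontinuous across inter-element edges. The key observation is that both $u_\ep$ and $u_1=u_0+\ep\chi^j(x/\ep)\frac{\pa u_0}{\pa x_j}$ are continuous on $\Om$ (since $u_0\in H^3(\Om)\hookrightarrow C^1$ in two dimensions and $\chi^j$ is periodic and smooth), so $\jm{u_\ep}=\jm{u_1}=0$ and therefore $\jm{u_\ep-\psi_h}=\jm{u_1-\psi_h}$. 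I would then apply Lemma~\ref{trace1} elementwise to $u_1-\phi_h^K$ and insert the $L^2$- and $H^1$-approximation bounds \eqref{lmultiappl2} and \eqref{lmultiappl1}; after summing over edges and multiplying by $\ga_0/\ep$, the dominant contribution is $\frac{h^3}{\ep}|u_0|_{H^2(\Om)}^2$, i.e.\ the term $h^{3/2}/\sqrt{\ep}$, with the remaining pieces absorbed into $\sqrt{\ep}$ and $\ep/d$ using $\ep\ls h\ls d$. Collecting the three contributions yields \eqref{energeerror111}. I expect this jump-penalty estimate to be the main obstacle, since controlling the genuine discontinuity of $\psi_h$ (rather than of its linear projection, as in the MsDPGM case) is precisely what distinguishes the MsDFEM analysis.
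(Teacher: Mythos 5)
Your proposal is correct and follows essentially the same route as the paper: Galerkin orthogonality reduces everything to the interpolation error for $\psi_h$, the volume and flux-penalty terms are recycled from \eqref{estr0} and \eqref{I}, and the jump-penalty term is handled exactly as in \eqref{result2}, via $\jm{u_\ep}=\jm{u_1}=0$, the trace inequality of Lemma~\ref{trace1}, and the bounds \eqref{lmultiappl1}--\eqref{lmultiappl2} of Lemma~\ref{lmultiapp}.
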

\begin{proof}
By use of the Galerkin orthogonality of $a(\cdot,\cdot)$, we only need to estimate the interpolation error.

Take $v_{h}$ as $\psi_h$ (see \eqref{intplMsFEM}). The following two estimates of the error have been shown in the proof of Theorem~\ref{energeerror}:
\begin{equation}\label{result1}
    \Big(\sum_{K \in \T_h}\norml{ (\mathbf{a}^\ep)^{1/2} \na(u_\ep-v_h)}{K}^2\Big)^{\frac{1}{2}}\lesssim h|u_0|_{H^2(\Om)}
    +\Big(\sqrt{\ep}+\frac{\ep}{d}\Big)|u_0|_{W^{1,\infty}(\Om)},
\end{equation}
and
\begin{equation}\label{I1}
\begin{split}
    &\sum_{e \in \Ga_h\cup \pa \Om}\frac{\ep}{\gamma_0}\norml{ \av {\mathbf{a}^\ep\na (u_\ep-v_h)\cdot \bn}}{e}^2\\
    &\qquad\qquad\lesssim h^2|u_0|^2_{H^2(\Om)}+\ep |u_0|^2_{W^{1,\infty}(\Om)}+\ep^4|u_0|^2_{H^3(\Om)}.
    \end{split}
\end{equation}

It remains to consider the term
$ \sum_{e \in \Ga_h\cup \pa \Om}\frac{\gamma_0}{\ep}\norml{\jm{u_\ep-v_h}}{e}^2 $.
Noting that $\jm{u_{\ep}}=\jm{u_1}=0$, then by use of the trace inequality \eqref{trace} and Lemma \ref{lmultiapp}, we have
\begin{equation}\label{result2}
    \begin{split}
     \sum_{e \in \Ga_h\cup \pa \Om}&
     \frac{\gamma_0}{\ep}\norml{\jm{u_\ep-v_h}}{e}^2 \lesssim
     \sum_{e \in \Ga_h\cup \pa \Om}
     \frac{\gamma_0}{\ep}\norml{\jm{u_1-v_h}}{e}^2\\
    & \lesssim  \ep^{-1} h^{-1}\sum_{K \in \T_h}\norml{u_1-v_h}{K}^2\\
     & +\ep^{-1} \Big(\sum_{K \in \T_h}\norml{u_1-v_h}{K}^2\Big)^{1/2}\Big(\sum_{K \in \T_h}\norml{\na(u_1-v_h)}{K}^2\Big)^{1/2}\\
    & \lesssim   \frac{h^3}{\ep}|u_0|_{H^2(\Om)}^2+\ep |u_0|^2_{W^{1,\infty}(\Om)},
     \end{split}
\end{equation}
where we have used the assumption $\ep \ls h \ls d$ and the Young's inequality to derive the above inequality.

Hence, from \eqref{result1}, \eqref{I1} and \eqref{result2}, it follows \eqref{energeerror111}
immediately. This completes the proof.
\end{proof}}

\end{document}